\newcommand{\ubar}[1]{\text{\b{$#1$}}}
\titleformat*{\paragraph}{\itshape}
\titleformat*{\subsubsection}{\itshape}
\renewcommand{\P}{\mathbb{P}}
\newcommand{\Z}{\mathbb{Z}}
\newcommand{\A}{\mathcal{A}}
\newcommand{\B}{\mathcal{B}}
\newcommand{\C}{\mathcal{C}}
\newcommand{\E}{\mathcal{E}}
\newcommand{\F}{\mathcal{F}}
\newcommand{\Q}{\mathcal{Q}}
\newtheorem{theorem}{Theorem}
\newtheorem{lemma}{Lemma}
\theoremstyle{remark}
\newtheorem{remark}{Remark}
\title{Crossing probabilities for planar percolation}
\author{Laurin Köhler-Schindler\footnote{ETH Zürich (laurin.koehler-schindler@math.ehz.ch, vincent.tassion@math.ethz.ch).} \and Vincent Tassion\footnotemark[1]}
\date{\today}
\begin{document}

\maketitle

\begin{abstract}
  We prove a general Russo--Seymour--Welsh result valid for any invariant planar  percolation process satisfying positive association. This means that  the probability of crossing a rectangle in the long direction is related by a homeomorphism to the probability of crossing it in the short direction. This homeomorphism is universal in the sense  that it  depends only on the aspect ratio of the  rectangle, and is uniform  in the scale and the considered model.
\end{abstract}

\section*{Introduction}
\vspace{-6pt}
We study a general bond percolation process on the square lattice $(\Z^2,\mathbb{E}^2)$.  A percolation configuration $\omega$  assigns to each edge $e$ a random status, \emph{open} if $\omega(e)=1$  or \emph{closed} if $\omega(e)=0$ (see Figure~\ref{fig:percolation-configuration} for an illustration). We consider a probability measure $\P$ on the space  of configurations $\{0,1\}^{\mathbb{E}^2}$ satisfying\vspace{-4pt}
    \begin{itemize}
    \item $\P$ is invariant under the symmetries of $\Z^2$ (translation, rotation, reflection);\vspace{-3pt}
    \item $\P$ is positively associated ($\P[\mathcal E\cap \mathcal F]\ge \P[\mathcal E]\cdot \P[\mathcal F]$ for all increasing events $\mathcal E,\mathcal F$).
    \end{itemize} \vspace{-3pt}
 We refer to Section \ref{section_background-and-notation} for precise definitions. The most famous example of such a measure is Bernoulli percolation, where edges are independently declared open with probability $p$  or closed with probability $1-p$. Many other models in statistical mechanics also satisfy the two above-mentioned properties, but involve dependencies between the status of edges. 

\begin{figure}
	\centering
	\begin{minipage}{.5\textwidth}
		\centering
		\includegraphics[width=.5\linewidth]{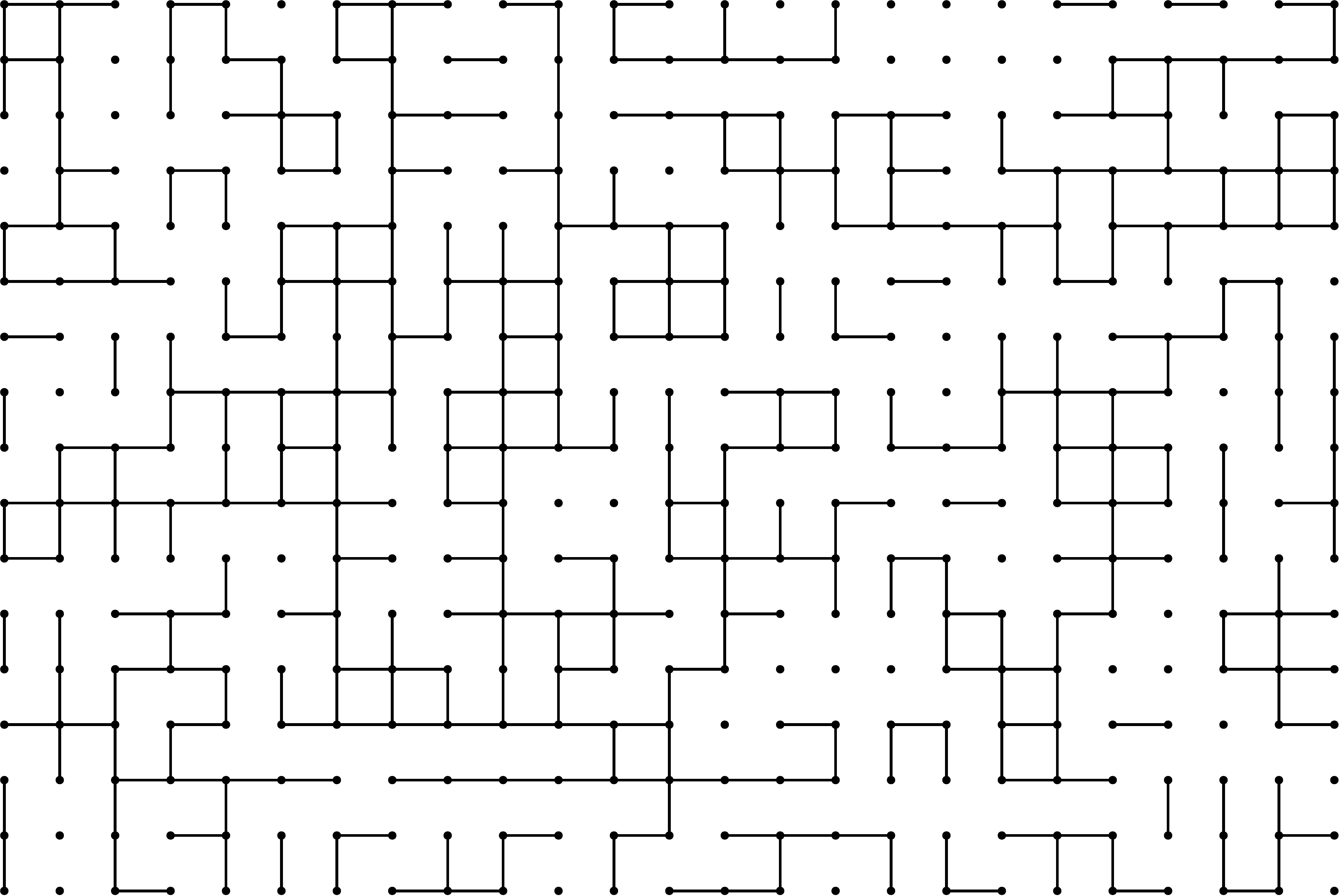}
		\captionof{figure}{A percolation configuration $\omega$}
		\label{fig:percolation-configuration}
	\end{minipage}%
	\begin{minipage}{.5\textwidth}
		\centering
		\includegraphics[width=.5\linewidth]{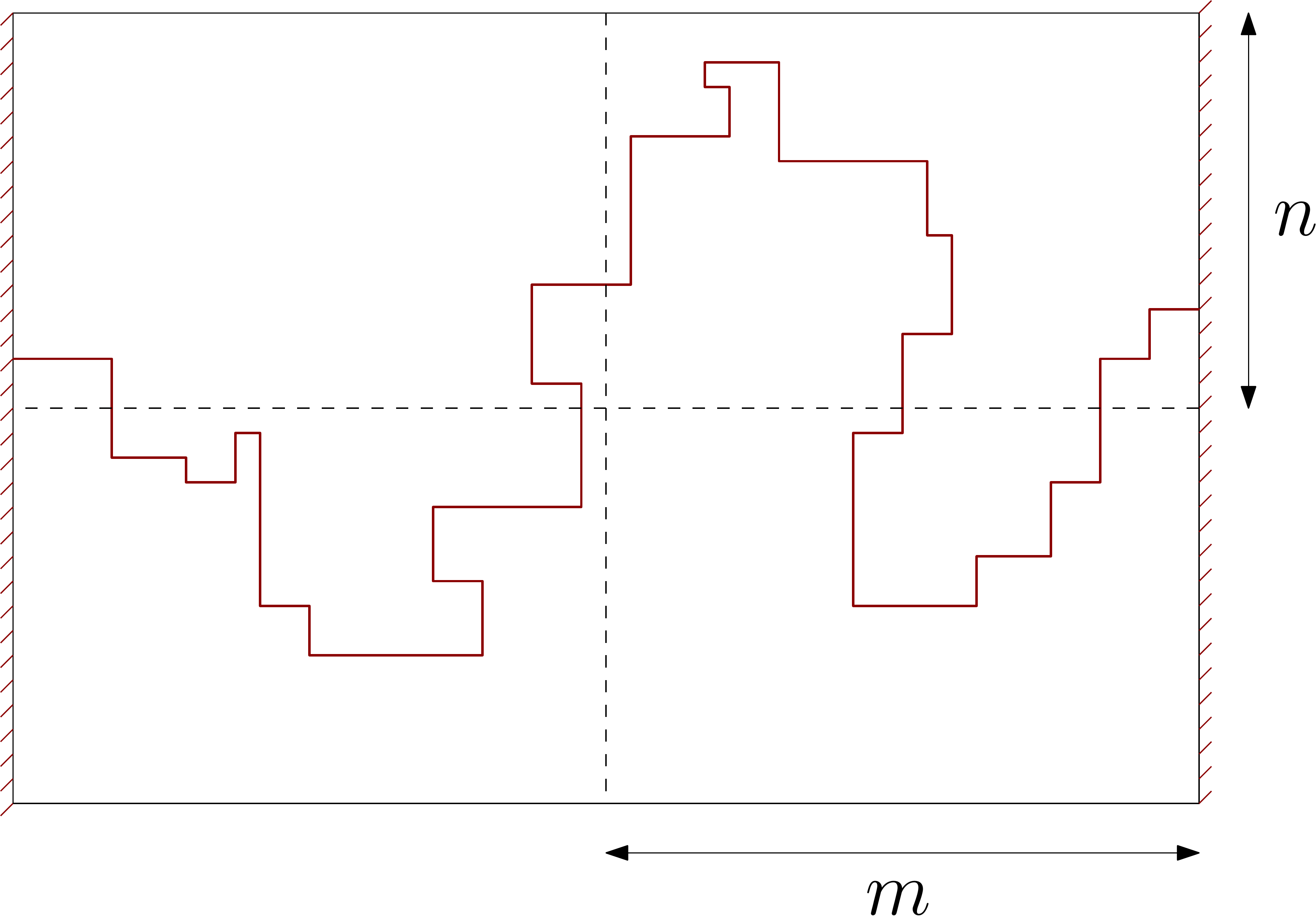}
		\captionof{figure}{The crossing event $\C(m,n)$}
		\label{fig:event_horizontal-crossing}
	\end{minipage}
\end{figure}
    
We are interested in the behaviour of crossing probabilities (illustrated in Figure \ref{fig:event_horizontal-crossing}, formally defined in Section \ref{section_background-and-notation}), which are probabilities of events of the form
    \begin{equation*}
      \label{eq:1}
      \C(m,n)=\left\{\text{
        \begin{minipage}{.55\linewidth}\centering
          There exists a path crossing from left to right in $[-m,m]\times[-n,n]$ made of open edges 
        \end{minipage}}\right\},\qquad m,n\ge1.
  \end{equation*}

Our main result shows a lower bound for crossing probabilities of rectangles in the long direction in terms of the short direction that is valid for any invariant, positively associated measure. 
\begin{theorem}
  For every $\rho \ge 1$, there exists a homeomorphism $\psi_\rho : [0,1] \to [0,1]$ such that for every invariant, positively associated measure $\P$ and for all $n\ge 1$,
  \begin{equation}
    \P[\C(\rho n,n)] \ge \psi_\rho \big(   \P[\C(n,\rho n)]\big).\label{eq:6}
  \end{equation} \label{thm_full-RSW}
\end{theorem}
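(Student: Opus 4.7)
The plan is a bootstrap argument of classical Russo--Seymour--Welsh type, using only the lattice symmetries of $\P$ and the FKG inequality; neither independence nor duality is available here, which is what makes the general setting non-trivial.

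The first step is to pass from the hypothesis $\P[\C(n,\rho n)] \geq p$ (a short-direction crossing of a tall rectangle) to a lower bound of the form $\P[\C(n,n)] \geq f_1(p,\rho)$ on the crossing probability of a square. A natural attempt is to use several vertically translated copies of the square together with FKG: each translate has the same square-crossing probability by translation invariance, and the crossing of the tall rectangle should, in some weak sense, force a related crossing event in at least one of them. Some care is needed because the crossing of the tall rectangle can zig-zag and need not lie inside any single translate of the square.

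The core step is an RSW-type inequality for squares: given $\P[\C(n,n)] \geq q$, produce $\P[\C(\alpha n, n)] \geq g(q)$ for some fixed $\alpha > 1$ and some increasing homeomorphism $g : [0,1] \to [0,1]$. The classical geometric idea is to combine horizontal crossings of two overlapping translates of the square with a vertical ``bridge'' crossing of the overlap; all three are increasing events of probability $\geq q$, so FKG gives them jointly with probability $\geq q^3$, and planar topology forces the three paths to concatenate into a long horizontal crossing. Iterating---each step roughly doubling the aspect ratio---yields $\psi_\rho$ as a composition of homeomorphisms after $O(\log \rho)$ iterations, with uniformity in $\P$ and $n$ inherited from the fact that only symmetries and FKG are used.

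The main obstacle is the core step in the general FKG setting. Classical proofs for Bernoulli percolation, and Tassion's proof for FK-percolation, rely on conditioning on the lowest crossing, the domain Markov property, or self-duality, none of which is available for an arbitrary invariant positively associated measure. One must therefore engineer the gluing using only the reflections and translations of $\Z^2$ together with FKG, ensuring that all events constructed along the way remain increasing and that the geometric concatenation is \emph{forced}, not merely possible. I expect the paper's central new contribution to be a symmetry-based replacement for the classical conditioning step, yielding a $g$ that is quantitative and universal.
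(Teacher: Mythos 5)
Your proposal correctly identifies the difficulty but does not actually resolve it, and it misses the structural novelty of the paper's proof. Let me be precise about where the gap lies.

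Your ``core step'' --- given $\P[\C(n,n)]\ge q$, produce $\P[\C(\alpha n,n)]\ge g(q)$ by gluing two overlapping square crossings with a vertical crossing of the overlap --- has the classical circularity: the overlap of two horizontally shifted squares is a rectangle that is taller than it is wide, so the ``vertical bridge'' you need is itself a crossing \emph{in the long direction} of a thin rectangle, i.e.\ exactly the kind of event you are trying to bound from below. You correctly call this ``the main obstacle,'' but you then defer to the paper for a ``symmetry-based replacement for the classical conditioning step'' rather than proposing one. This is the entire content of the theorem; a proof that stops at naming the obstacle is not a proof. The paper itself flags that this standard gluing strategy ``fails, because one cannot exclude the case when crossings are typically very tortuous and look like Peano curves,'' which is precisely the failure your sketch runs into.

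The paper's actual route is not a fixed-scale bootstrap at all, so the iteration scheme you describe (``each step roughly doubling the aspect ratio'') does not appear. The key new objects are \emph{quasi-crossings} $\Q(n,m)$, encoding that a horizontal crossing of the large rectangle at scale $n$ passes through two $m$-scale arms, together with a topological ``corridor'' lemma (Lemma~\ref{lem:1}). The central mechanism is the \emph{cascading property} (Lemma~\ref{lemma:third-relation}): for $m\ge\ell\ge k$, either the bridge probability at scale $\ell$ is not too small, or a quasi-crossing of $(m,\ell)$ combined with one of $(\ell,k)$ yields a quasi-crossing of $(m,k)$. The proof then chooses an \emph{adaptive} intermediate scale $m_0$ --- the last scale below $n$ at which a good bridge bound holds --- and cascades the quasi-crossing from scale $n$ down to $m_0$, finally closing it with the bridge at $m_0$ (Lemma~\ref{lemma:fourth-relation}). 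The philosophical shift is the opposite of yours: rather than trying to force crossings to behave so that fixed-scale gluing works, the argument \emph{exploits} the meandering of crossings to find a lower scale where gluing becomes possible. There is no analogue of this in your outline.

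A secondary issue: you write that ``neither independence nor duality is available here.'' Self-duality of the measure is indeed unavailable, but \emph{planar} duality --- the purely topological fact that a rectangle is crossed horizontally by primal-open edges if and only if it is not crossed vertically by dual-open edges --- is used in an essential way. The reduced statement the paper proves is $b(n)\ge f_{20}(1-b^\star(n))$, a relation between primal and dual bridge probabilities, and the whole induction is run symmetrically for primal and dual. Your first step (from $\P[\C(n,\rho n)]$ to a square-crossing bound via vertically translated squares and FKG) is also not what the paper does and is not obviously salvageable: a horizontal crossing of the tall rectangle need not produce a horizontal crossing in any single translated square, and FKG alone will not make it do so. The paper instead relates short-direction crossings to \emph{arm events} $\A(n)$ via the dual of a gluing lemma (Lemma~\ref{lemma:first-relation}(ii)), which is a different and more careful reduction.

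In short, your sketch reproduces the classical RSW template and correctly diagnoses why it breaks, but it contains no mechanism for the core step, and the mechanism the paper actually uses --- quasi-crossings, the corridor lemma, the cascading alternative, and the adaptive choice of $m_0$ --- is not anticipated.
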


\subsection*{Comments}  
\paragraph{1.  Robustness.}  While we show the theorem and its proof in the framework of bond percolation on $\mathbb Z^2$ for presentational purposes, we would like to point out  that the same proof applies to more general lattices (with sufficient symmetries) and more general processes (including site percolation, continuum models, or level lines of random fields). Furthermore the homeomorphism $\psi_\rho$ is universal, in the sense that it  does not depend on the considered model within  this general  class of  symmetric, positively associated percolation processes.

\paragraph{2. One scale relation.} We emphasize that the theorem makes no assumption on lower scales. Hence, in contrast to previous approaches using renormalization techniques, we obtain a relation between crossing probabilities that only involves one scale.

\paragraph{3. Finite-volume extensions.}    For several applications, the framework of an invariant measure on the whole lattice is too restrictive. In particular, in the study of models from statistical mechanics, it is more natural to work with a measure defined in a finite box. In  Section~\ref{section_extensions}, we discuss extensions  of our methods to this finite-volume framework, where one needs a replacement for the hypothesis of translation invariance.

\paragraph{4.  Motivations.}  The RSW theorem for Bernoulli percolation has been instrumental in the study of critical percolation (where it implies that crossing probabilities are bounded by  constants depending on the aspect ratio of the considered rectangle, independent of the scale)  and its near-critical regime (where it implies bounds on crossing probabilities below the correlation length). While the RSW result and its consequences have been extended to other models, most of the previously  known approaches involve positive association and symmetry together with an  additional assumption that is specific to the considered model.  In this sense, the theorem above unifies the previous results, and we hope that it will serve as a general tool to study critical and near-critical percolation processes in the future. 

\begin{wrapfigure}{r}{3cm}
	\centering
	\includegraphics[scale=0.2]{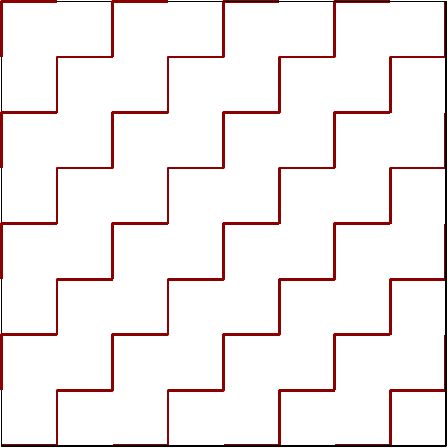}
	\caption{$\omega_{\text{diag}}$}
	\label{fig:counterexample}
\end{wrapfigure}
\paragraph{5. Minimal hypotheses.} If one removes the symmetry assumption, the statement is no longer valid. To see this, consider the deterministic configuration $\omega_{\text{diag}}$ made of infinite parallel ``diagonal lines'': such a model is positively associated, all the rectangles are crossed in the short direction, but no  rectangle is crossed in the long direction. The statement does not hold either for  general symmetric measure without the positive association assumption.  For a counter-example,  consider the symmetric percolation  defined by $\omega_{\text{diag}}$ with probability $1/2$, and its reflection  in the vertical axis otherwise.

\subsection*{Previous works}

In 1978, the first result of this kind was proven in the case of Bernoulli percolation by Russo \cite{russo1978note, russo1981critical} and independently by Seymour and Welsh \cite{seymour1978percolation}. The Russo-Seymour-Welsh (RSW) result for Bernoulli percolation has later been proven in different and shorter ways, for example in \cite{bollobas2006short} and in Smirnov's elegant proof which exploits symmetric domains and self-duality (see e.g. Section 3.1 of \cite{MR2523462}). It has quickly become a building block in the study of critical percolation (for example, it plays an important role in  the proof of  \cite{kesten1980critical} that $p_c=1/2$ for Bernoulli percolation on $\mathbb Z^2$, or in the proof of \cite{MR1851632} that critical percolation on the hexagonal lattice is conformally invariant) and near-critical percolation (see e.g. \cite{MR879034}, \cite{MR3790067}).

Consequently, RSW theory was extended to other planar percolation models, leading to the resolution of important conjectures. In the spirit of the original proof for Bernoulli percolation, exploration arguments have been developed for positively associated models satisfying a spatial Markov property, including continuum percolation in $\mathbb{R}^2$ with bounded radii \cite{roy1990russo,alexander1996rsw}, Bernoulli percolation on non-symmetric lattices \cite{MR2815602}, oriented percolation \cite{duminil2018box}, and FK-percolation with cluster-weight $q\ge 1$ \cite{duminil2011connection,beffara2012self,duminil2017continuity,duminil2019renormalization}. In the latter case, an additional difficulty arises due to the effect of boundary conditions and, additionnally to new RSW results,  those papers include new renormalization methods  to deduce uniform bounds at criticality. This leads to important breakthroughs for FK-percolation, such as the computation of the critical parameter  in \cite{beffara2012self}, or the rigorous identification of the order of the phase transition (see \cite{duminil2017continuity,duminil2016discontinuity,MR4150894}). More recently similar arguments were used to study the existence of macroscopic loops  in  loop $O(n)$-models in \cite{duminilcopin2020macroscopic} and the  fluctuations of random height functions in \cite{glazman2019uniform,duminilcopin2019logarithmic}.

An orthogonal and far-reaching approach has been developed by Bollobás and Riordan \cite{bollobas2006critical} who used a renormalization strategy to prove a weaker version of RSW for Voronoi percolation and to deduce $p_c=1/2$. Inspired by the new approach, a RSW result for positively associated models satisfying a quasi-independence assumption---such as Voronoi percolation---has been proven in \cite{tassion2016crossing}. It has been applied to continuum percolation in $\mathbb{R}^2$ with unbounded radii in \cite{ahlberg2018sharpness}, and used to study nodal lines of Gaussian fields in \cite{beffara2017percolation,beliaev2017russo,beliaev2018discretisation,rivera2019quasi} as well as Liouville first-passage percolation in \cite{ding2019liouville}. Without a quasi-independence assumption, a weaker version of RSW providing bounds along a subsequence of scales has been obtained in \cite{tassion2016crossing} and quantified in the appendix of \cite{muirhead2020phase}.

Finally, let us mention that RSW statements have also  been obtained  for percolation on two-dimensional slabs \cite{duminil2016absence,MR3729640,MR3707489,MR3955708}, and for planar percolations without positive association \cite{beffara2017withoutFKG,muirhead2020phase}.

In the spirit of \cite{bollobas2006critical,tassion2016crossing}, we prove Theorem \ref{thm_full-RSW} using a renormalization strategy, in the sense that we analyze the effect of a change of scale on crossing probabilities. We obtain new relations  on the crossing probabilities imposing that the RSW relation must hold. In contrast to previous renormalization approaches, these relations do not use independence properties, but rely on  positive association  and symmetry only.

The statement of Theorem~\ref{thm_full-RSW} was conjectured in \cite{duminil2016rsw} (see Conjecture 7.2). We refer to that paper for a more exhaustive background and motivations about  RSW-type results.
 
\subsection*{Organization of the paper}

In the next section, we introduce necessary background and notation. In Section~\ref{sec:sketch-proof}, we sketch the main steps of the proof. In Section~\ref{sec:reduct-relat-betw}, we give standard relations between certain connection  probabilities. The main new ingredient, the quasi-crossing event is introduced and discussed in  Section~\ref{sec:quasi-crossings}. The formal proof of the theorem is detailed in Section~\ref{sec:proof-main-theorem}, based on lemmas proved in the two previous sections. Finally, in Section \ref{section_extensions}, we discuss generalizations of the proof and provide details on extending the proof to finite-volume measures.

\subsection*{Acknowledgements}
This project has received funding from the European Research Council (ERC) under the European Union’s Horizon 2020 research and innovation program (grant agreement No 851565). 
Both authors are part of the NCCR Swissmap, and LKS also acknowledges the support of the SNF Grant $\#175505$.
We are grateful to Hugo Duminil-Copin and Ioan Manolescu for stimulating  questions and discussions that greatly inspired our study  of extensions to finite-volume measures. The notion of  symmetric stochastic domination leading to Theorem~\ref{thm_stochastic-domination} was suggested to us by Hugo Duminil-Copin, and   Theorem~\ref{thm_uniform-bound}  in Section~\ref{subsection_uniform-bound} answers a question of Ioan Manolescu, during a presentation of our work. 
We thank Hugo Vanneuville for comments on a preliminary version of this paper.

\section{Background and notation}
\label{section_background-and-notation}

On the square lattice $(\mathbb Z^2,\mathbb E^2)$, we consider a probability measure $\P$ on the space of percolation configurations $\{0,1\}^{\mathbb E^2}$. 

\paragraph{Symmetries.} 
We write $\Sigma$ for the group of symmetries of $\Z^2$ that is generated by translations by a vector $x \in \mathbb{Z}^2$, $\pi/2$-rotation around the origin, and vertical reflection along the $y$-axis. 
We assume that the measure $\P$ is invariant under the symmetries of $\Z^2$, meaning that for any event $\E$ and any symmetry $\sigma \in \Sigma$,
\begin{equation*}
  \P [\sigma\boldsymbol{\cdot}\E]=\P[\E],
\end{equation*}
where $\sigma\boldsymbol{\cdot}\E$ denotes the image of $\mathcal E$ under $\sigma$.      
Since we will regularly consider the translation of an event $\E$ by a vector $x \in \Z^2$, we write $x + \E$ for brevity.

\paragraph{Positive association.} An event $\mathcal E$ is called \emph{increasing} if its occurrence is favoured by open edges: $\omega \in \mathcal E$ and $\omega \le \omega'$ for the standard product ordering imply $\omega' \in \mathcal E$. 
We assume that increasing events $\mathcal E, \mathcal F$ are positively correlated under the measure $\P$,
\begin{equation*}
\P\left[\mathcal E \cap \mathcal F\right] \ge \P\left[\mathcal E \right] \cdot \P\left[\mathcal F\right],
\end{equation*}
which is usually referred to as the FKG inequality \cite{fortuin1971correlation}. Analogously, events favoured by closed edges are called \emph{decreasing}, and it is easy to see that decreasing events are also positively correlated. A direct consequence of positive association is the so-called square-root-trick (see, e.g.,\cite{grimmett1999percolation}), which states
\begin{equation*}
\max_{1 \leq i\leq k} \P[\E_i] \geq 1-(1-\P[\E])^{1/k}
\end{equation*}
 for increasing events $\E_1,\ldots,\E_k$  with $\E = \bigcup_{i=1}^{k} \E_i$.

\paragraph{Open paths.}
To obtain a relation between short and long crossings of rectangles, we carefully study the behaviour of open paths which form such crossings. To this end, we introduce the following notation. A \emph{path} $\gamma$ is a finite sequence of disjoint vertices $(\gamma_i)_{i=0}^{n} \subseteq \mathbb Z^2$ with $\gamma_{i-1}\gamma_{i} \in \mathbb E^2$ for all $1\le i\le n$, and it is said to be \emph{open} in a percolation configuration $\omega$ if $\omega(\gamma_{i-1}\gamma_{i}) = 1$ for all $1 \le i \le n$.
Naturally embedding the square lattice in the plane $\mathbb R^2$, we identify a path in $\mathbb Z^2$ with the piecewise linear, simple curve in $\mathbb R^2$ that is formed by straight lines between adjacent vertices along the path. Considering $D \subseteq \mathbb R^2$, we say that two subsets $A,B$ are \emph{connected} if there exists an open path in $D$ that starts in $A$ and ends in $B$, and the event that $A$ and $B$ are  connected is a typical instance of an increasing event.

\paragraph{Rectangle crossings}
For $m,n \ge 1$, we denote by $R(m,n)$ the rectangle $[-m,m]\times[-n,n]$, and we write $\C(m,n)$ for the event of a horizontal \emph{crossing} in $R(m,n)$, that is a connection from the left side $\{-m\}\times[-n,n]$ to the right side $\{m\}\times[-n,n]$.

\paragraph{Arms and bridges}
\begin{figure}
	\centering
	\begin{minipage}{.5\textwidth}
		\centering
		\includegraphics[width=.8\textwidth]{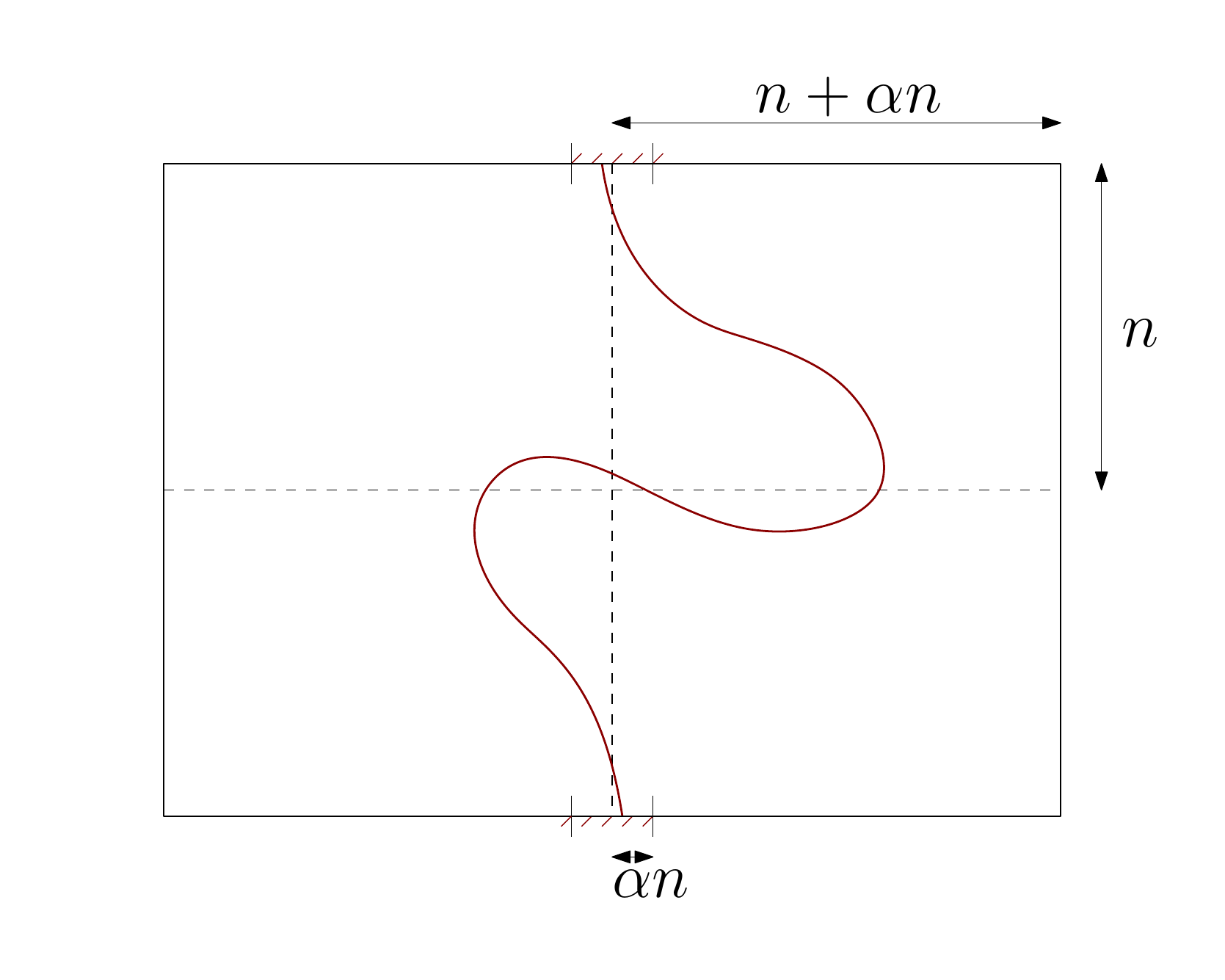}
		\captionof{figure}{The arm event ${\A(n)}$}
		\label{fig:event_arm}
	\end{minipage}%
	\begin{minipage}{.5\textwidth}
		\centering
		\includegraphics[width=.8\textwidth]{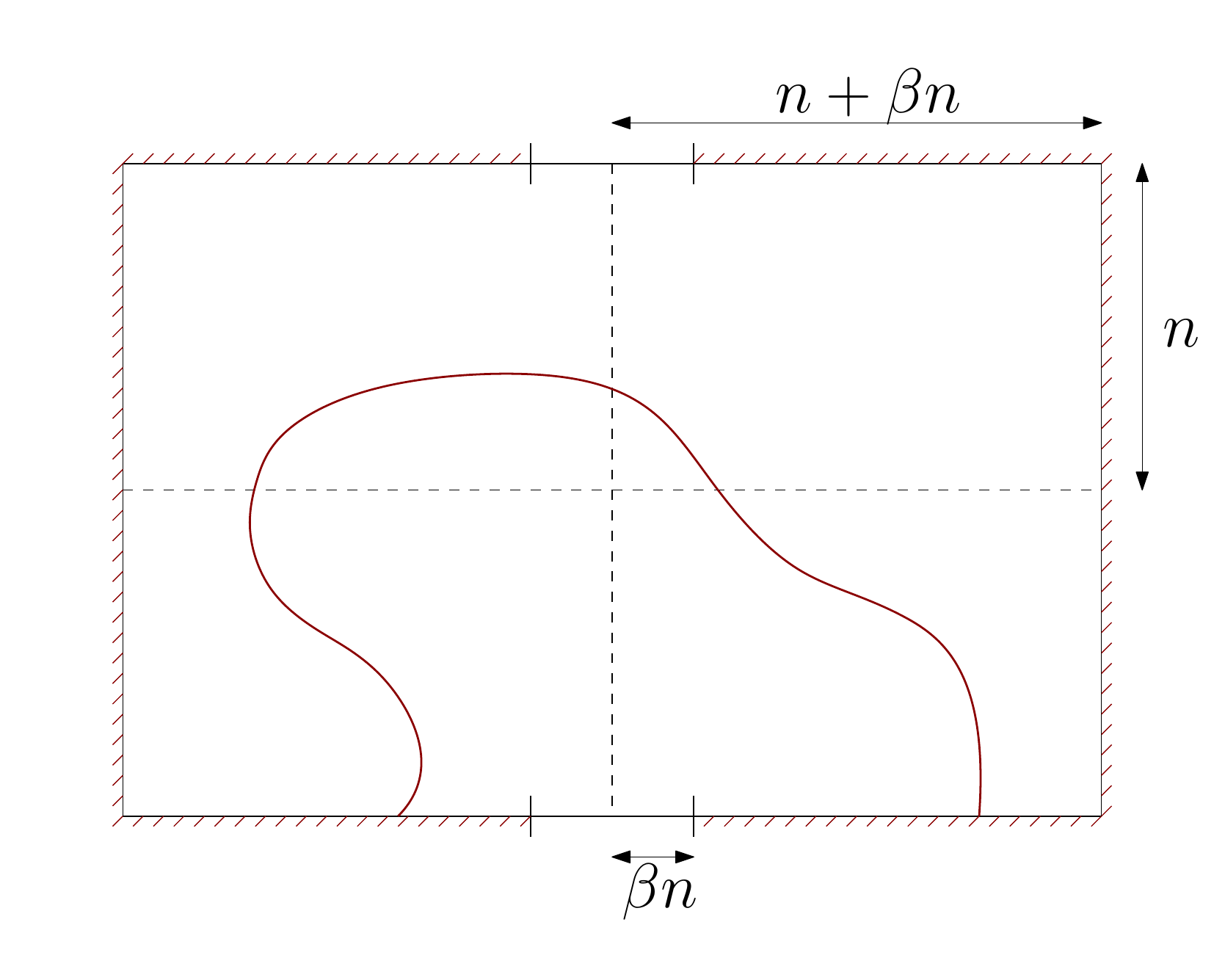}
		\captionof{figure}{The bridge event $\B(n)$}
		\label{fig:event_bridge}
	\end{minipage}
\end{figure}

Setting
\begin{equation}
	\alpha := 1/64 \quad \text{and} \quad \beta:= 1/12
	\label{eq:alpha-beta}
\end{equation}
for brevity, the \emph{arm event} $\A(n)$ is defined by a connection from the upper target $[-\alpha n, \alpha n] \times \{n\}$ to the lower target $[-\alpha n, \alpha n] \times \{-n\}$ in the short direction of the rectangle $R(n+\alpha n,n)$ (illustrated in Figure \ref{fig:event_arm}).  The \emph{bridge event} $\B(n)$ is defined by a connection from the left part $[-n-\beta n,-\beta n] \times \{-n,n\} \cup \{-n-\beta n\} \times [-n,n]$ to the right part  $[\beta n,n + \beta n] \times \{-n,n\} \cup \{n + \beta n\} \times [-n,n]$ in the long direction of the rectangle $R(n+\beta n,n)$ (illustrated in Figure~\ref{fig:event_bridge}).

\paragraph{Duality}
To study the absence of open paths, we consider the dual lattice $\left(\Z^2\right)^\star = (1/2,1/2) + \Z^2$ with edge set $\left(\mathbb{E}^2\right)^\star$. Every dual edge $e^\star \in \left(\mathbb{E}^2\right)^\star$ intersects a unique edge $e \in \mathbb{E}^2$, and we define the dual percolation configuration $\omega^\star$ by $\omega^\star(e^\star)=1-\omega(e)$. In analogy to the previous paragraph, a path $\gamma = (\gamma_i)_{i=0}^{n} \subseteq \left(\mathbb Z^2\right)^\star$ is called \emph{dual open} if $\omega^\star(\gamma_{i-1}\gamma_i)=1$ for all $1\le i\le n$, and this gives rise to the notion of \emph{dual connected} subsets.
It is important to notice that  the dual probability measure on $\{0,1\}^{\left(\mathbb{E}^2\right)^\star}$ induced by the primal measure $\mathbb P$ is also invariant under symmetries and positively associated. In particular all the statements that we prove  for the primal measure have a dual analogue.

We make use of the following standard fact in planar percolation: in the rectangle $R(m,n)$ for integers  $m$, $n$, the left is  connected to the right  if and only if the top  is \emph{not} dual connected to the bottom. 
We emphasize that this fact does not rely on self-duality of the square lattice. It holds in greater generality, and we refer to \cite{grimmett1999percolation,bollobas2006percolation} for more background on the use of planar duality in percolation theory.
Combined with the invariance under symmetries, this fact directly implies that for every $m,n$ integers,
\begin{equation}
	\label{eq:7}
	\mathbb P[\mathcal C(m,n)]=1-\mathbb P[\mathcal C^\star(n,m)],
\end{equation}
where $\C^\star(n,m)$ denotes a horizontal \emph{dual crossing} in the $R(n,m)$, that is a dual connection from the left side to the right side.

\section{Sketch of proof}
\label{sec:sketch-proof}

In this section we present the main ideas of the proof of Theorem~\ref{thm_full-RSW}.
A first idea that comes to mind to obtain the bound \eqref{eq:6} is to use positive association and symmetries to  ``glue'' short crossings together in order to obtain a long crossing. This strategy (that we will refer to as the ``standard gluing'') works very well if one starts with a lower bound on long crossings and wishes to obtain a lower bound on the probability of  even longer crossings. If one starts with  short crossings instead, the strategy fails, because one cannot exclude the case when  crossings are ``typically'' very tortuous and look like Peano curves (see \cite{bollobas2006percolation} for more details on this phenomenon).
Contrary to previous approaches, we will not try to rule out this behaviour, but rather use it in our favour. More precisely, if crossings at scale $n$ typically meander, we will define a well-chosen  lower scale $m_0\le n$ (possibly much smaller) that they typically reach, and at which we will be able to glue them to each other.

 In order to keep the focus on the ideas, we do not sketch the proof of Theorem~\ref{thm_full-RSW} in full generality, but we explain the proof of the following weaker statement: a  uniform bound on the crossings in the short direction implies a uniform bound on the crossing probabilities in the long direction, namely
\begin{equation}
  \label{eq:2}
  (\forall \rho>1\quad \inf \mathbb P[\mathcal C(n,\rho n)]>0) \implies (\forall \rho>1\quad \inf \mathbb P[\mathcal C(\rho n,n)]>0).
\end{equation}
By standard gluing constructions (see Section~\ref{sec:reduct-relat-betw}),  short crossings  can be easily related to the arm events introduced in the previous section, and long crossings to bridge events.  Therefore it suffices to prove  the following implication between arm and bridge events
\begin{equation}
  \label{eq:3}
  \inf \mathbb P[\mathcal A(n)]>0\implies \inf \mathbb P[\mathcal B(n)]>0.
\end{equation}
This reduction of the RSW problem to arms and bridges is not new and was already used in several previous works on the topic.

We now give more details on how to prove \eqref{eq:3}.
Let us fix a constant $c_0>0$ such that 
\begin{equation}
  \label{eq:9}
  \forall n\ge1 \quad \mathbb P[\mathcal A(n)]\ge c_0.
\end{equation}
Our goal is then to show that the bridge probability $\mathbb P[\mathcal B(n)]$  is bounded below by some other constant $c_3>0$ that depends on $c_0$ only. (Below we will introduce intermediate constants $c_0>c_1>c_2>c_3>c_4>0$ which by convention do not depend on  the considered scales $m$ and $n$).

In order to  implement the strategy described at the beginning of the section, we need to formalize the fact that ``crossings at scale $n$ reach scale  $m$'', which is achieved by  the notion of \emph{quasi-crossing}. Setting $\beta=1/12$, it is defined by
\begin{equation}
  \label{eq:8}
  \mathcal Q(m,n)=
  \begin{minipage}[c]{.3\linewidth}
    \includegraphics[width=\linewidth]{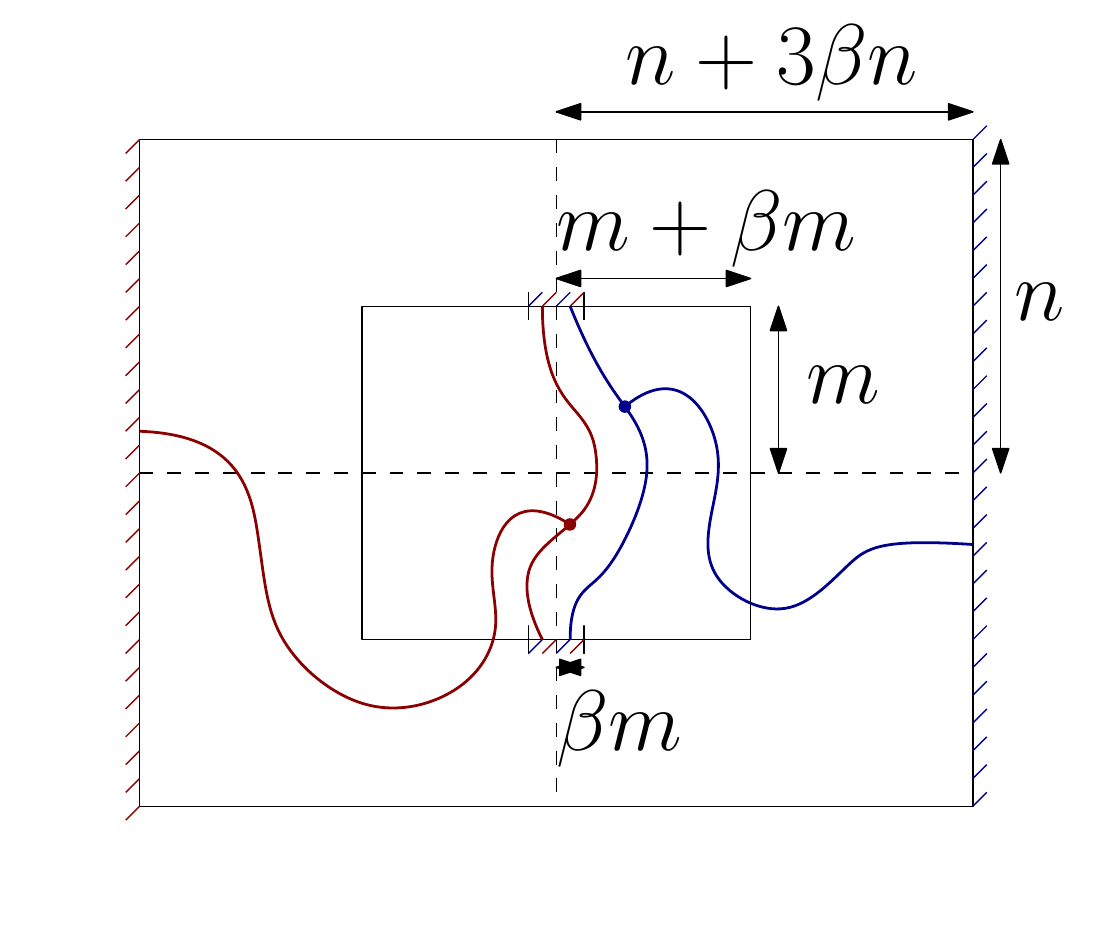}
  \end{minipage}
\end{equation}
where the picture  represent  two arms\footnote{Formally, this is not exactly the notion of arm as defined before, since the target regions have size $\beta m$ instead of $\alpha m$, and the rectangle has a slightly different aspect ratio.} at scale $m$, which  are respectively connected to the left and right side of $R(n+3\beta n,n)$ (see Section~\ref{sec:quasi-crossings} for a more precise definition).
This quasi-crossing event is instrumental in our approach and we list below its three main properties (\textbf{P1-3}), under the hypothesis~\eqref{eq:3}.  

First, for every $n\ge1$ we have
  \begin{equation}
    \label{eq:10}\tag{\textbf{P1}}
    \mathbb P[\mathcal Q(4n,n)]\ge c_1. 
  \end{equation}
  This corresponds to the idea that we can always assume that  crossings at scale $4n$ reach scale $n$. The proof of this fact involves a standard gluing construction: in order to obtain the event  $Q(4n,n)$, we  use symmetric versions of arm events $\mathcal A(m)$ at intermediate scales $m \in [n,4n]$.
  
 The key property of the quasi-crossing, that we call the cascading property is the following alternative: for every $1\le m \le m' $, we have
  \begin{equation}
    \label{eq:11}\tag{\textbf{P2}}
   \left.
     \begin{array}[c]{ll}
       \text{either} & \mathbb P[\mathcal B(m')]\ge c_3,\\
       \text{or}& \mathbb P[\mathcal Q(m',m)]\ge c_2\implies \mathbb P[\mathcal Q(4m',m)]\ge c_2.
     \end{array}\right.
  \end{equation}  
  This  statement illustrates how  we use the ``fractality'' of open paths, whenever we do not have a lower bound on the bridge event at scale $m'$: in this case, the scale $m$ cannot \emph{only} be reached from scale $m'$, but also from the scale $4m'$.

Finally it follows directly from the definition of $\mathcal  Q(n,m)$ and positive association that
  \begin{equation}
    \label{eq:12}\tag{\textbf{P3}}
    \mathbb P[\mathcal B(n)]\ge \mathbb P[\mathcal Q(n,m)] \mathbb P[\mathcal B(m)].
  \end{equation}
  In words, one can use a bridge at scale $m$  to ``close'' a quasi-crossing and obtain a bridge at scale $n$. This allows us to ``transport'' a lower bound on the bridge from scale $m$ to scale~$n$.

We now explain how to conclude the proof from these three properties. Let $n\ge 1$.
By considering the largest scale below $n$ where the bridge event has probability larger than $c_3$, we can define a scale $m_0$ such that 
\begin{equation}
  \label{eq:17}
  \mathbb P[\mathcal B(m)]
  \begin{cases}
    \ge c_3& m=m_0,\\
    < c_3 &  m_0<m\le n,
  \end{cases}
\end{equation}
where $c_2$ is the constant appearing in Property \eqref{eq:11} above. If $n=m_0$, we are done, and we can assume without loss of generality that $1\le m_0<n$.  Then, using first \eqref{eq:10}, and then the cascading property \eqref{eq:11}, one can prove by induction that crossings at scale $n$ reach the scale $m_0$ with a good probability:
\begin{equation}
  \label{eq:15}
  \mathbb P[\mathcal Q(n,m_0)]\ge c_2.
\end{equation}
Finally the closing property \eqref{eq:12} concludes that
\begin{equation}
  \label{eq:18}
  \mathbb P[\mathcal B(n)]\ge c_4:=c_2c_3. 
\end{equation}

In order to prove the more general statement of Theorem~\ref{thm_full-RSW}, we will prove three statements in Lemmas~\ref{lemma:second-relation}--\ref{lemma:fourth-relation} which generalize respectively the three properties \textbf{(P1-P3)} presented above by replacing the constants by expressions involving increasing  homeomorphisms. The strategy of the proof  follows exactly the strategy presented above, with additional technicalities arising from the absence of uniform lower bound on the arm events.

\section{From rectangle crossings to bridges and arms}
\label{sec:reduct-relat-betw}

In this section, we relate short crossings to arm events and long crossings to bridge events, using standard gluing constructions. We  introduce the notation
\begin{equation*}
	a(n):=\P\big[\A(n)\big] \quad \text{and} \quad b(n):=\P\big[\B(n)\big],
\end{equation*}
and summarize the relations that will be used to prove the main theorem  in Section \ref{sec:proof-theorem}.

\begin{lemma}\label{lemma:first-relation}
	Set $f_1(x):=(1-(1-x)^{1/2000})^{2000}$. For all $n \ge 1$, we have
	\begin{itemize}
		\item[(i)] $\P\left[\C(8 n, n)\right] \ge f_1\left(b(n)\right)$,
		\item[(ii)] $a(n) \ge f_1\left(\P\left[\C( n, 8n)\right]\right)$,
		\item[(iii)] $\P\left[\C(\rho n, n)\right] \ge \P\left[\C(2n, n)\right]^{2\rho-1}$ for every integer $\rho \ge 1$.
	\end{itemize}
\end{lemma}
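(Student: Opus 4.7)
The plan is to prove the three items of Lemma~\ref{lemma:first-relation} separately, relying throughout on FKG combined with translation invariance and $\pi/2$-rotation symmetry, and (for (i) and (ii)) on the square-root trick.

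For (iii), I tile $R(\rho n, n)$ with $\rho$ horizontal translates of $R(2n,n)$ centered at $(2k-\rho-1)n$ for $k=1,\ldots,\rho$, so that consecutive translates overlap in a copy of $R(n,n)$. The intersection of the $\rho$ horizontal crossings of these translates with the $\rho-1$ vertical crossings of the overlap squares is an increasing event whose restriction to $R(\rho n, n)$ contains a left-right crossing. Each horizontal crossing has probability $\P[\C(2n,n)]$ by translation invariance; a vertical crossing of $R(n,n)$ has probability $\P[\C(n,n)]\ge\P[\C(2n,n)]$, using $\pi/2$-rotation and the deterministic inclusion $\C(2n,n)\subseteq\C(n,n)$ (restricting a horizontal crossing of $R(2n,n)$ to the smaller rectangle preserves the left-right connection). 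FKG then yields the bound $\P[\C(2n,n)]^{2\rho-1}$.

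For (i), I would use the square-root trick to promote $\B(n)$ to refined versions with pinned endpoint segments, and chain them by FKG. Partitioning each of the length-$4n$ thick left and right boundary regions of $\B(n)$ into a number of small segments and applying the square-root trick (after using symmetries to reduce to a single application on each side) produces a refined bridge $\B_0(n)$ with pinned entry and exit segments of probability at least $1-(1-b(n))^{1/2000}$. Translation invariance allows placing horizontal translates of $\B_0(n)$ at arbitrary positions; chaining a constant number of them at horizontal spacing roughly $2n$ with matched exit-entry segments, and applying FKG, produces an increasing event contained in $\C(8n,n)$ with probability at least $(1-(1-b(n))^{1/2000})^{k}$ for some $k\le 2000$, which dominates $f_1(b(n))$ by monotonicity of the expression in the outer exponent.

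For (ii), a $\pi/2$-rotation reduces $\C(n,8n)$ to a vertical crossing of $R(8n,n)$; I would partition both the top and bottom sides of $R(8n,n)$ into intervals of length $2\alpha n$ and apply the square-root trick to obtain a high-probability connection between specific top and bottom intervals $T_i$ and $B_j$. Vertical reflection symmetry gives $\P[T_i\to B_j]=\P[T_j\to B_i]$, and the planar intersection lemma (two paths realising $T_i\to B_j$ and $T_j\to B_i$ in $R(8n,n)$ must share a vertex) combined with FKG yields $\P[T_i\to B_i]\ge\P[T_i\to B_j]^{2}$, aligning the top and bottom endpoints. The main technical obstacle I anticipate is restricting the resulting connection to the narrow arm rectangle $R(65n/64,n)$ rather than the ambient $R(8n,n)$; I expect this to be handled by an additional square-root argument over horizontal translates of the narrow rectangle combined with FKG, with the large constant $2000$ in $f_1$ absorbing the accumulated combinatorial factors.
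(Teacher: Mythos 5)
Your argument for (iii) is the same standard gluing as the paper's and is correct. Your sketch for (i) is in the right spirit (pin the endpoints of the bridge via the square-root trick, then chain by FKG), but the key step is glossed over: after the square-root trick the pinned entry and exit segments can lie on any of the three boundary pieces making up each end of the bridge (two horizontal pieces and one vertical piece), and translates of the resulting pinned bridge do not automatically match up for chaining. The paper makes this explicit by distinguishing two cases after a square-root trick with five alternatives: either a connection lands on the bottom-right corner segment, whose reflection-symmetrization yields a ``bottom arc'' event $\F$ that can be chained across $R(8n,n)$, or the full horizontal crossing $\C(n+\alpha n,n)$ occurs with high probability and is chained by the standard horizontal/vertical gluing. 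Your assertion of ``matched exit-entry segments'' implicitly needs this dichotomy spelled out.

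The serious gap is in (ii). Your Smirnov-style symmetrization at best produces a connection $T_i\to B_i$ inside the wide rectangle $R(8n,n)$, but the arm event $\A(n)$ requires a path confined to the narrow rectangle $R(n+\alpha n,n)$. Confining a crossing of the wide rectangle to a narrow vertical strip is precisely the heart of the RSW difficulty: a path realising $T_i\to B_i$ in $R(8n,n)$ need not lie in any horizontal translate of $R(n+\alpha n,n)$, and the family of confinement events does not decompose as a union to which the square-root trick could apply, so the proposed ``additional square-root argument over horizontal translates'' has no purchase. The paper sidesteps confinement entirely via duality: introducing the bridge variant $\widetilde{\B}(n)$ with offset $\alpha$ instead of $\beta$ in the rectangle $R(n+\alpha n,n)$, the arm event is the exact complement of the dual bridge, $\P[\widetilde{\B}^\star(n)] = 1 - a(n)$, while $\P[\C^\star(8n,n)] = 1-\P[\C(n,8n)]$; then (ii) follows from the dual analogue of (i) applied to $\widetilde{\B}^\star(n)$ together with the algebraic identity $f_1(x) = 1 - f_1^{-1}(1-x)$. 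This duality step is the missing idea in your proposal, and without it (ii) does not go through.
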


\begin{figure}
	\centering
	\begin{minipage}{.5\textwidth}
		\centering
		\includegraphics[width=.8\textwidth]{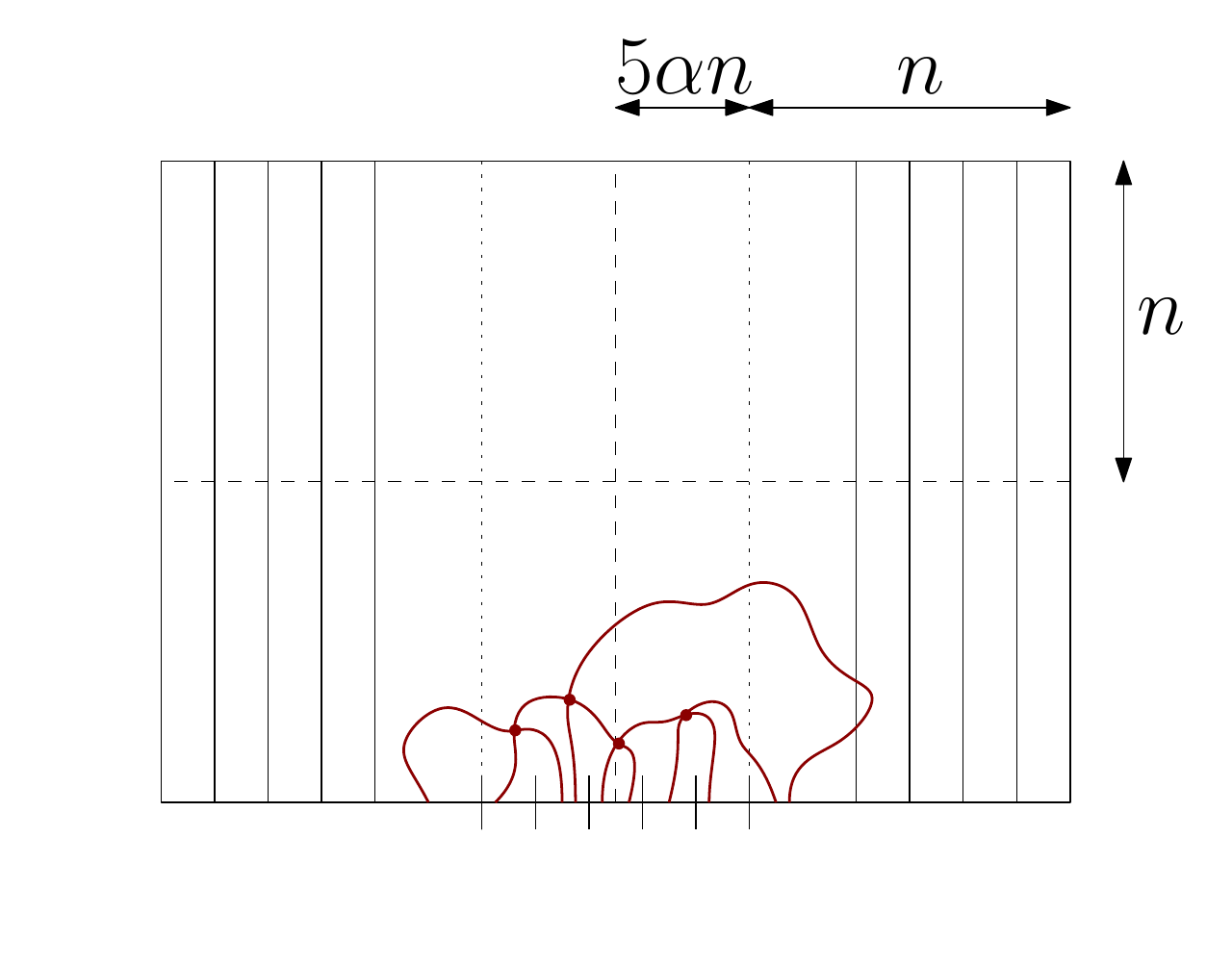}
	\end{minipage}%
	\begin{minipage}{.5\textwidth}
		\centering
		\includegraphics[width=.75\textwidth]{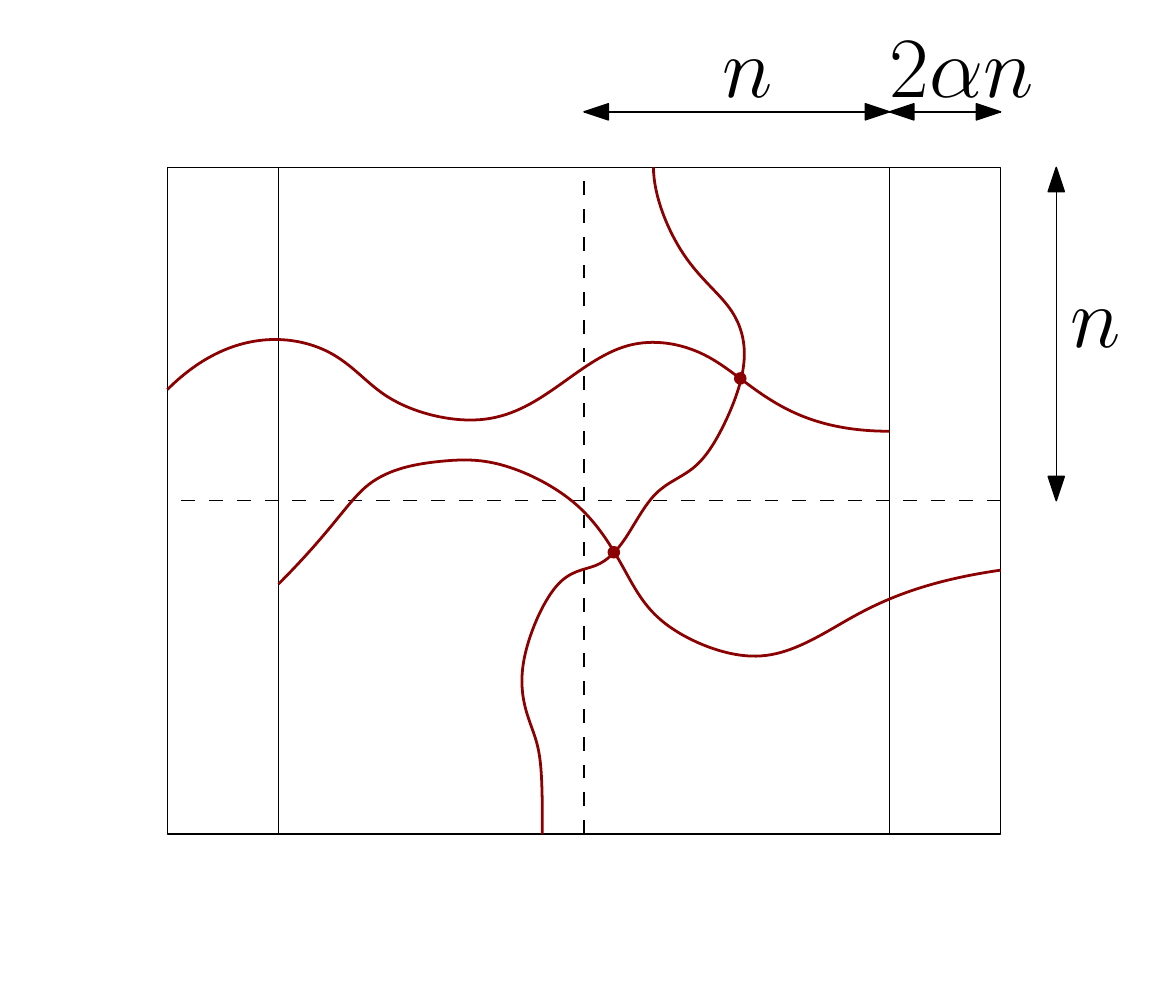}
	\end{minipage}
	\caption{Illustrations of gluing constructions used in the proof of Lemma \ref{lemma:first-relation}. Left: intersection of five translated versions of the event $\F$. Right: intersection of two translated versions of the event $\C(n+\alpha n,n)$ with a top-down crossing in $R(n,n)$. }
	\label{fig:illustration-prop1}
\end{figure}

\begin{proof}
	For simplicity, let us assume that $\alpha n$ is an integer. We define the following variation of the bridge event. Let $\widetilde{\B}(n)$ be defined by a connection from the left part $[-n-\alpha n,-\alpha n] \times \{-n,n\} \cup \{-n-\alpha n\} \times [-n,n]$ to the right part  $[\alpha n,n + \alpha n] \times \{-n,n\} \cup \{n + \alpha n\} \times [-n,n]$ in the rectangle $R(n+\alpha n,n)$. Analogously the event $\widetilde{\B}^\star(n)$ denotes a dual connection from the left part to the right part. 
	
	Below, we will prove for all $n\ge 1$,
	\begin{equation}\label{eq:23}
		\P\left[\C(8n,n)\right] \ge f_1\left(\P[\widetilde{\B}(n)]\right). 
	\end{equation}
	This implies \textit{(i)} by monotonicity since $\P[\widetilde{\B}(n)] \ge \P[\B(n)]$. To obtain \textit{(ii)}, we use its dual analogue 
        \begin{equation}
          \label{eq:30}
          \P[\C^\star(8n,n)] \ge f_1\left(\P[\widetilde{\B}^\star(n)]\right).
        \end{equation} By the duality relation \eqref{eq:7}, $\P[\C^\star(8n,n)] = 1- \P[\C(n,8n)]$ and similarly, it holds that $\P[\widetilde{\B}^\star(n)] = 1- \P[\A(n)]$. 	
	Combining these two equalities with \eqref{eq:30} concludes since $f_1(x) = 1-f_1^{-1}(1-x)$ for $x\in[0,1]$.

	 To prove \eqref{eq:23}, we first distinguish possible realizations of the event $\widetilde{\B}(n)$ and then conclude in each case using basic gluing constructions. 
	By the square-root-trick and invariance under symmetries, it holds that 
	\begin{equation*}
		\max \Big\{\mathbb{P}\left[\E\right],\mathbb{P}\left[\C(n+\alpha n,n)\right] \Big\} \ge 1 - \left(1-\P\left[\widetilde{\B}(n)\right]\right)^{1/5},
	\end{equation*}
	where $\E$ denotes a connection in the rectangle $R(n+\alpha n)$ from the left part $[-n-\alpha n,-\alpha n] \times \{-n,n\} \cup \{-n-\alpha n\} \times [-n,n]$ to the bottom-right boundary segment $[\alpha n,n + \alpha n] \times \{-n\}$. 
	
	In the first case, we deduce that $\P\left[\F\right] \ge \P\left[\E \cap (v \boldsymbol{\cdot} \E)\right] \ge (1 - (1- \P[\widetilde{\B}(n)])^{1/5})^2$, where $v \boldsymbol{\cdot} \E$ denotes the vertical reflection of $\E$ along the y-axis and  $\F$ denotes a connection in  $R(n + \alpha n,n)$ from the bottom-left  $[-n-\alpha n,-\alpha n] \times \{-n\}$ to the bottom-right boundary segment $[\alpha n,n + \alpha n] \times \{-n\}$. It now suffices to intersect $8/\alpha$ horizontally translated versions of the event $\F$ as illustrated in Figure \ref{fig:illustration-prop1} to guarantee the occurrence of the long crossing $\C(8 n,n)$. Noting that $16/\alpha \le 2000$ and using positive association, this establishes \eqref{eq:23} in this case. 
	
	In the second case, we combine long crossings from left to right in $[-n-2\alpha n, n]\times[-n,n]$ and $[-n, n+2\alpha n]\times[-n,n]$ with a crossing from top to bottom in $[-n,n]^2$ to construct the event $\C(n+2\alpha n, n)$ (see Figure \ref{fig:illustration-prop1}). As a consequence of invariance under symmetries and positive association, we obtain $\P[\C(n+2\alpha n,n)] \ge \P[\C(n+\alpha n,n)]^3$. Inductively, we deduce that $\P[\C(8 n,n)] \ge \P[\C(n + \alpha n,n)]^{2k -1}$ for $k = 7/\alpha$. This concludes the proof of \eqref{eq:23} since $14/\alpha \le 2000$.
	
	Finally, to obtain \textit{(iii)}, we use the same gluing construction as in the second case (with $\alpha$ replaced by $1$).
\end{proof}
\color{black}

\section{Quasi-crossings}
\label{sec:quasi-crossings}

\subsection{Definition}

\begin{figure}
	\centering
	\includegraphics[width=.41\linewidth]{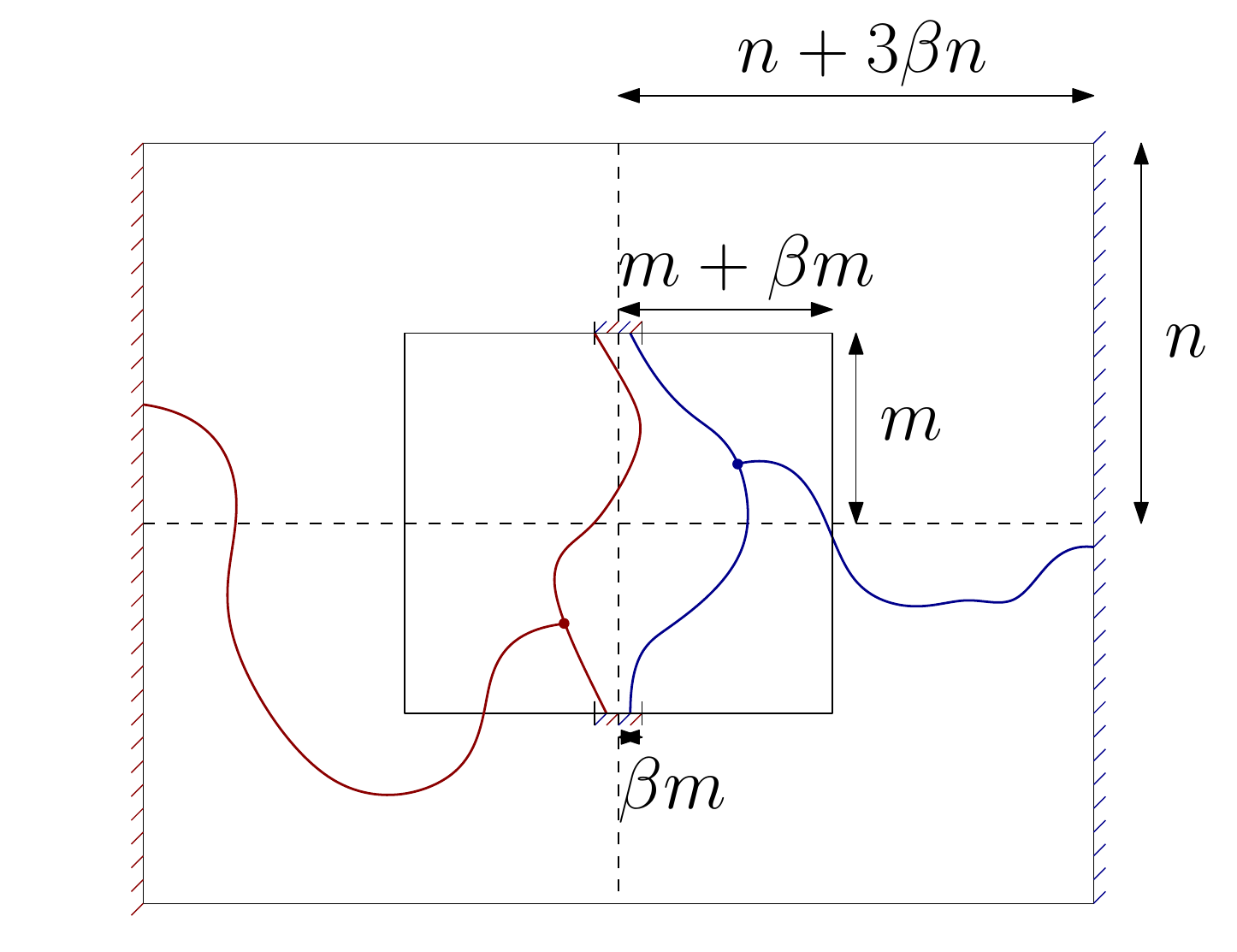}
	\caption{The quasi-crossing event $\mathcal{Q}(n,m)$}
	\label{fig:quasi-crossing}
\end{figure}

Here and subsequently, an \emph{$m$-path} is a path in the rectangle $R(m+\beta m,m)$ that starts from the upper target $[-\beta m, \beta m]\times\{m\}$ and ends at the lower target $[-\beta m, \beta m]\times\{-m\}$.
At scale $n\ge 1$, the \emph{quasi-crossing} $\mathcal{Q}(n,m)$ connecting down to scale $m\le n$ is defined as the following event (see Figure \ref{fig:quasi-crossing}): in the rectangle $R(n + 3\beta n,n)$, there exist an open $m$-path that is connected to the  left side $\{-n - 3\beta n\}\times[-n,n]$ and an open $m$-path that is connected to right side $\{n + 3\beta n\}\times[-n,n]$. 
We denote its probability by 
\begin{equation*}
	q(n,m):=\P\big[\mathcal{Q}(n,m)\big].
\end{equation*}
The event $\mathcal Q(n,m)$ formalizes the idea that crossings at scale $n$ reach down to scale $m$.

\subsection{A topological lemma }
\label{sec:topological-lemma-}

In this section, we prove a deterministic lemma from planar topology, that will be useful when working with quasi-crossings.  One can think of the quasi-crossing as the existence of a ``corridor with two walls'': the left wall is made of the union of the left $m$-path and the path connecting it to the left side of the large rectangle, and the right wall is its reflected analogue. Then, if one intersects the event $\Q(n,m)$ with the existence of a top-down crossing, this top-down crossing must go through the ``corridor'' or it must hit one of the walls.

\begin{lemma}[Corridor construction]
  \label{lem:1}
  Let $m\ge1$ and consider two rectangles $R:=R(m,m+\beta m)$ and $S$  with $R\subset S$. 
  Let $\gamma,\, \gamma'$ be two $m$-paths, and $W$ (resp. $W'$) be the union of $\gamma$ (resp. $\gamma'$) together with  a path in $S$ connecting $\gamma$ (resp. $\gamma'$) to the left (resp. right) side  of $S$. Then every path  from top to bottom in $S$ must 

  -  contain an $m$-path,

  - or intersect the structure $W\cup W'$.
\end{lemma}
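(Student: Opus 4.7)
My plan is to give a planar-topology argument in three steps. The high-level strategy is to treat $W$ together with the left side of $S$, and $W'$ together with the right side of $S$, as two connected barriers attached to opposite sides of the disk $\bar S$. A top-to-bottom path $\pi$ in $S$ that avoids $W \cup W'$ would have to separate these barriers, and I will argue that the specific shape of the $m$-paths $\gamma \subset W$ and $\gamma' \subset W'$ then forces $\pi$ to contain a sub-arc with exactly the properties required to be an $m$-path.

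First I would establish the topological separation. Viewing $\bar S$ as a closed topological disk whose boundary is decomposed into its four sides, and letting $\pi$ be a simple arc from the top to the bottom of $S$ disjoint from $W \cup W'$, I apply the Jordan curve theorem in $\bar S$ to deduce that $\pi$ cuts $\bar S$ into two components: one contains the left side together with $W$, and the other contains the right side together with $W'$. In particular, $\gamma$ and $\gamma'$ end up on opposite sides of $\pi$.

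Next I would extract the $m$-path. Each of $\gamma$ and $\gamma'$ has endpoints in the upper and lower targets of its local $m$-path frame (translates of $[-\beta m,\beta m]\times\{\pm m\}$). Combined with the separation from the previous step, this pins down, via an intermediate-value argument on the horizontal coordinate of $\pi$ along the lines $y=m$ and $y=-m$, narrow horizontal windows through which $\pi$ must cross. Taking the sub-arc of $\pi$ between a pair of such crossings, and using the hypothesis $R\subset S$ with $R=R(m,m+\beta m)$ to confine this sub-arc horizontally within a translate of the $m$-path rectangle $R(m+\beta m,m)$, I extract from $\pi$ a sub-path that meets the definition of an $m$-path.

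The hard part will be the quantitative extraction in the last step: turning the qualitative separation of $\gamma$ from $\gamma'$ into concrete bounds on where $\pi$ crosses the critical horizontal lines, and ensuring that the resulting sub-arc remains horizontally confined to an $m$-path rectangle with endpoints in the required target zones. The auxiliary rectangle $R=R(m,m+\beta m)$ plays a key role here, acting as a bounding box for the corridor between the two walls and providing the horizontal confinement needed for the extracted piece to qualify as an $m$-path.
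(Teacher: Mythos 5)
Your proposal takes the contrapositive opposite to the paper's: you assume $\pi$ avoids $W\cup W'$ and try to extract an $m$-path from $\pi$, whereas the paper assumes $\pi$ contains no $m$-path and uses planar duality to produce a curve $\eta\subset R\setminus\pi$ from the left part to the right part of $R$; then $W\cup\eta\cup W'$ is a connected left--right crossing of $S$, the Jordan argument forces the top--bottom path $\pi$ to hit it, and since $\pi\cap\eta=\emptyset$ it must hit $W\cup W'$. That direction never needs to extract a sub-arc with prescribed endpoints and prescribed confinement, which is exactly what makes the paper's argument short.

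Your first two steps are sound: $\pi$ separates $\bar S$ with $W$ (hence $\gamma$) on one side and $W'$ (hence $\gamma'$) on the other, and an intermediate-value argument along the segments $[-\beta m,\beta m]\times\{\pm m\}$ shows that $\pi$ meets both targets. The gap is precisely the step you flag as ``hard'': knowing that $\pi$ passes through some point of the upper target and some point of the lower target does \emph{not} yield a sub-arc of $\pi$ between two such points that stays inside $R(m+\beta m,m)$ --- $\pi$ can enter and leave this rectangle many times, and nothing you have said rules out the scenario where every sub-arc of $\pi$ joining the two targets makes an excursion outside the rectangle. This confinement is the whole content of the lemma; it cannot be obtained from the containment $R\subset S$ (which only localizes the walls, not $\pi$), and an $m$-path must lie in the fixed rectangle $R(m+\beta m,m)$, not a translate of it, so the phrase ``a translate of the $m$-path rectangle'' is not something the definition permits. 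To rescue your route you would have to argue that the appropriate sub-arc of $\pi$ lies in the ``corridor'' region bounded by $\gamma$, $\gamma'$ and arcs of the two targets, and that this corridor is itself contained in $R(m+\beta m,m)$; this is true but is an extra topological argument that you have not supplied, and it is what the paper's dual-curve construction lets one avoid entirely.
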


\begin{remark}\label{rem:1}
  The lemma above can be generalized in several ways. In particular, we will also use it  with the roles of left-right and top-down  being switched.
\end{remark}

\begin{figure}
	\centering
	\begin{minipage}{.5\textwidth}
		\centering
		\includegraphics[width=.8\linewidth]{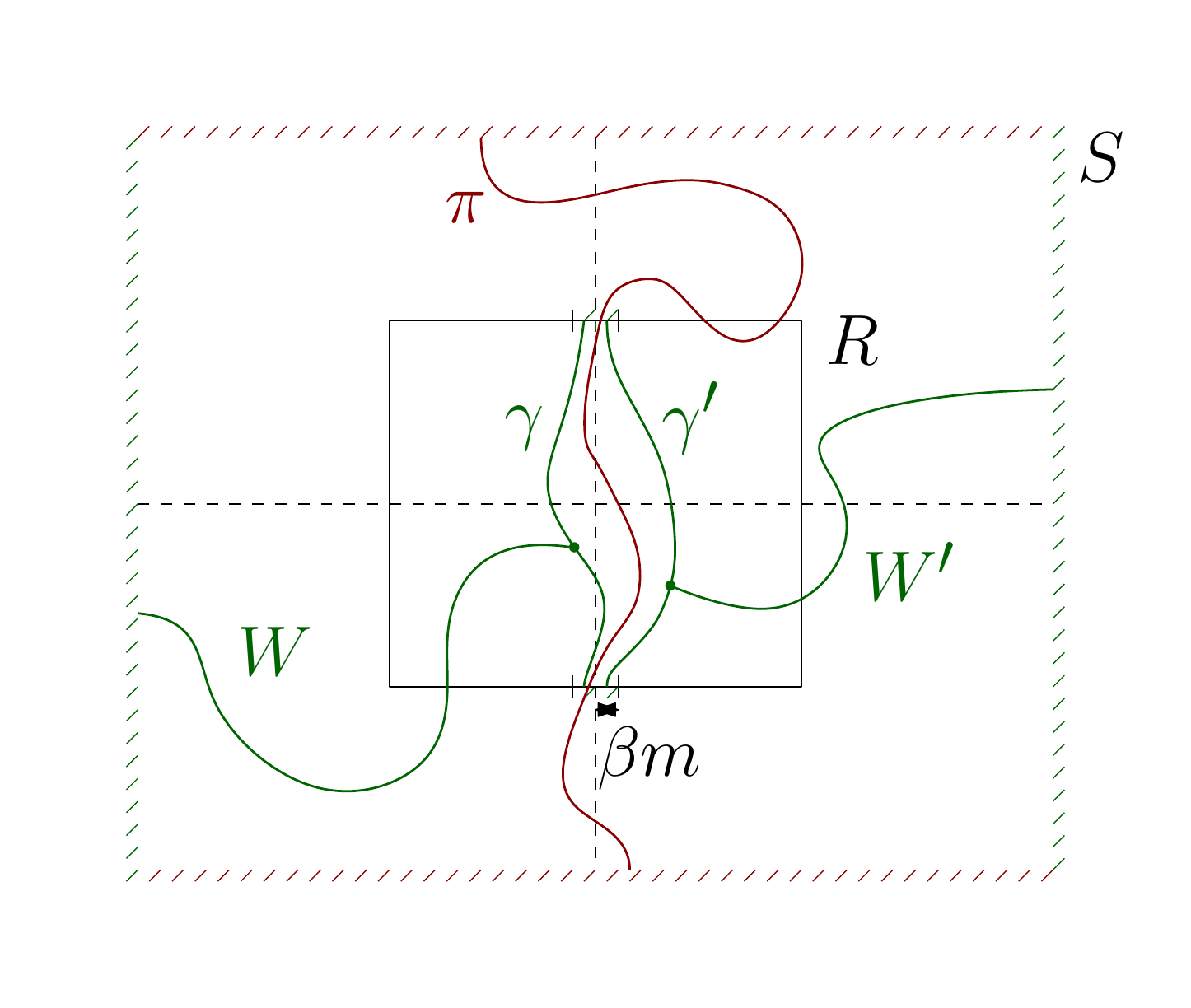}
	\end{minipage}%
	\begin{minipage}{.5\textwidth}
		\centering
		\includegraphics[width=.8\linewidth]{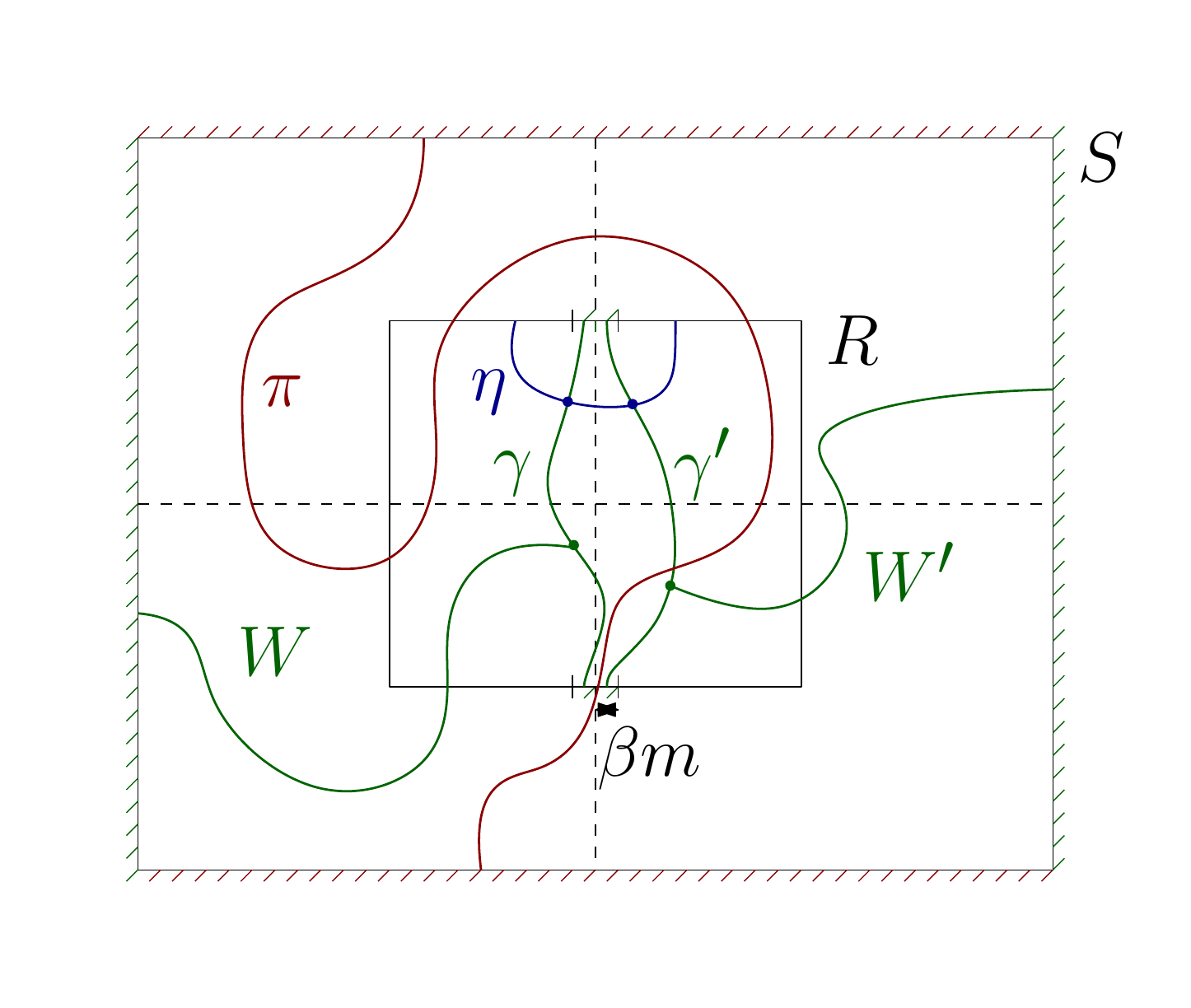}
	\end{minipage}
	\caption{Illustration of the corridor construction in Lemma \ref{lem:1}. Left: the path $\pi$ contains an $m$-path.  Right: the path $\pi$ intersects the structure $W \cup W'$.}
	\label{fig:illustration-corridor-lemma}
\end{figure}

\begin{proof}
   Recall that we see  paths as piecewise linear continuous curves embedded in the plane.    Let $\pi$ be a path from top to bottom and consider the region of the plane defined by
  \begin{equation}
    \label{eq:16}
    B=R\setminus\{\pi\}.
  \end{equation}
  If we assume that $\pi$ contains \emph{no}  $m$-path as a subpath, one can find a continuous curve $\eta$  in $B=R\setminus\{\pi\}$ that connects the left part $\{-m-\beta m\}\times[-m,m]\cup  \{-m-\beta m,0\}\times\{-m,m\}$ to the right part   $\{m+\beta m\}\times[-m,m]\cup  [0,m+\beta m]\times\{-m,m\}$ of $R$ (we emphasize that $\eta$ is  a general continuous planar curve and  we do not require that it corresponds to a lattice path).  Since $\eta$ must intersect both $\gamma$ and $\gamma'$, the set $W\cup\eta\cup W'$ is connected and therefore, it must  contain a continuous curve crossing the rectangle $R$ from left to right. This implies that the path $\pi$ intersect       $W\cup\eta\cup W'$. Since $\pi\cap \eta=\emptyset$ (by definition), this concludes that $\pi$ must intersect $W\cup W'$.     
\end{proof}

\subsection{Construction of quasi-crossings from arm events}
\label{section_proof-of-lemma1}
	\begin{figure}
	\centering
	\begin{minipage}{.5\textwidth}
		\centering
		\includegraphics[width=.75\linewidth]{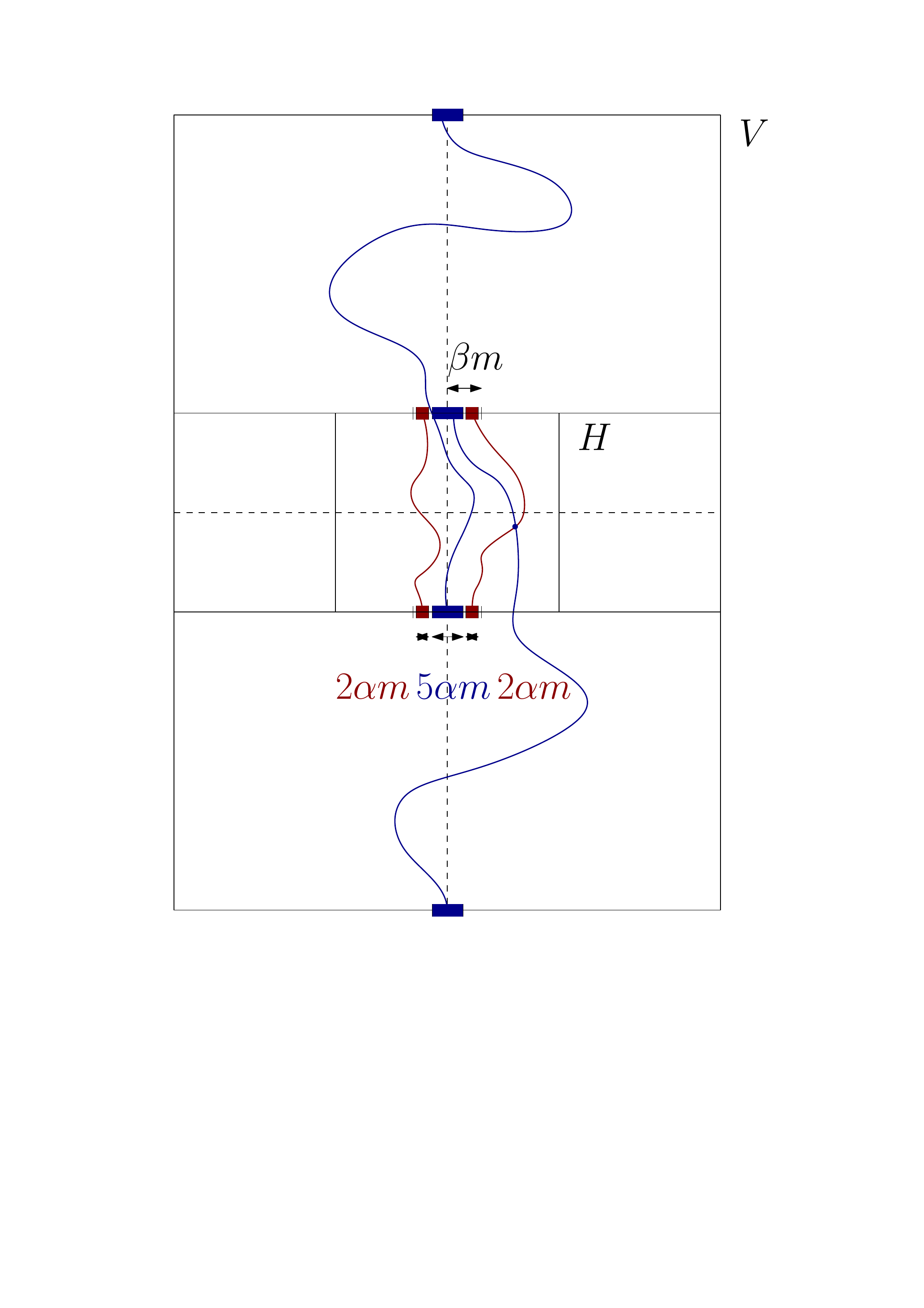}
	\end{minipage}%
	\begin{minipage}{.5\textwidth}
		\centering
		\includegraphics[width=\linewidth]{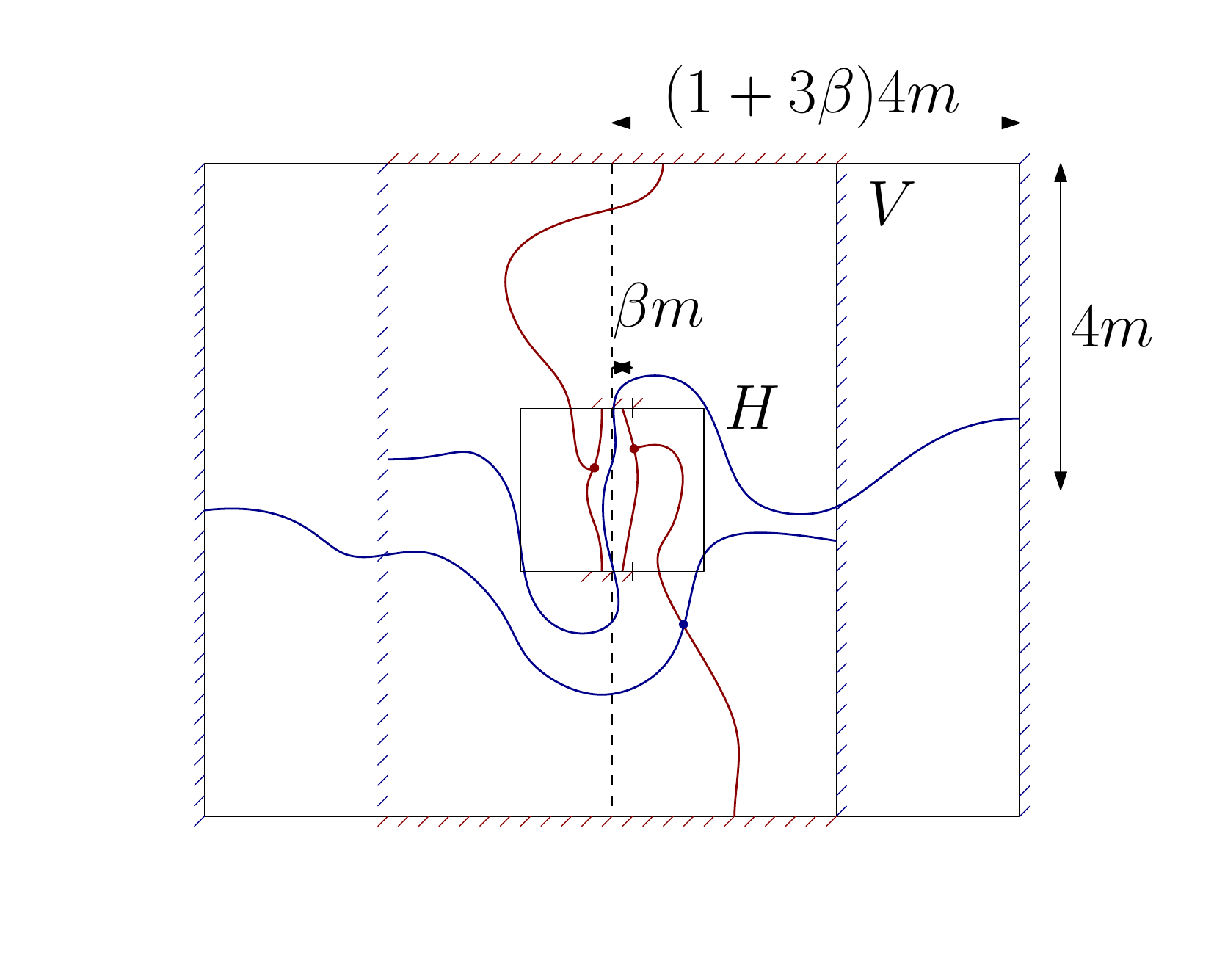}
	\end{minipage}
	\caption{Illustrations of the quasi-crossing construction in Lemma \ref{lemma:second-relation}. Left: intersection of four arm events. Right: intersection of two open $m$-paths that are connected to top and bottom in $V$ (red) with two short crossings from left to right (blue).}
	\label{fig:illustration-lemma1}
      \end{figure}
      In this section,  we construct quasi-crossings using arm events, leading to the following lemma.

      \begin{lemma}
	Let $m=4^j$ for some $j \ge 0$. We have
	\begin{equation*}
	q(4m,m) \ge \left(\min_{\ell \in [m,4m]} a(\ell) \right)^6. 
	\end{equation*}
	\label{lemma:second-relation}
      \end{lemma}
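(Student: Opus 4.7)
The plan is to lower bound $q(4m, m)$ by the probability of an intersection of six suitably chosen arm events at scales in $[m, 4m]$ and then apply the FKG inequality (positive association). Each of the six events will be a translate or $\pi/2$-rotation of $\A(\ell_i)$ for some $\ell_i \in [m, 4m]$, so that by invariance under the symmetries of $\Z^2$ its probability equals $a(\ell_i)$, and hence is at least $\min_{\ell \in [m, 4m]} a(\ell)$.

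By the $x \to -x$ symmetry of the big rectangle $R(5m, 4m)$, I would design three events for the left side whose joint occurrence implies the existence of an open $m$-path (in some translated $R(m + \beta m, m)$) connected to the left side $\{-5m\} \times [-4m, 4m]$, and mirror the construction for the right side. For the left half my candidate events are: (i) a vertical arm $\A(m)$ placed in the left portion of $R(5m, 4m)$, which automatically yields an open $m$-path since $\alpha < \beta$ implies that the $\A(m)$ rectangle and targets are contained in those of the $m$-path; (ii) a rotated (horizontal) arm $\A(\ell)$ at some $\ell \in [m, 4m]$ whose left target sits on $\{-5m\} \times [\cdots]$, giving an open horizontal connection ending on the left side of the big rectangle; and (iii) a further arm event at a suitable scale in $[m, 4m]$ whose role is to force the topological intersection of (i) and (ii).

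The topological forcing is a planar Jordan-arc argument: a vertical $\A(\ell)$ and a rotated horizontal $\A(\ell)$ centered at the same point, together with the containing ``$+$''-shaped region, must cross, because the vertical arm (viewed as a simple arc with endpoints on the boundary of the $+$) separates the $+$ into a ``left'' and a ``right'' component, and the horizontal arm connects these two components. I would position the three events so that successive pairs cross in this way, yielding a single connected open structure in $R(5m, 4m)$ whose ``vertical part'' contains the $m$-path and whose ``horizontal part'' reaches the left side. Symmetrically, the remaining three events give an $m$-path connected to the right side. Positive association then yields
\[
q(4m, m) \;\ge\; \P\Big[\bigcap_{i=1}^{6} E_i\Big] \;\ge\; \prod_{i=1}^{6} \P[E_i] \;\ge\; \Big(\min_{\ell \in [m, 4m]} a(\ell)\Big)^6.
\]

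The main obstacle is the geometric bookkeeping: choosing the exact centers, scales, and orientations so that (a) every event is at a scale in $[m, 4m]$ (the hypothesis $m = 4^j$ makes scales such as $m$, $2m$, $4m$ natural choices), (b) every topological crossing required for connectivity is forced by the placement, and (c) the resulting open connected structure really contains, as a subset, the open $m$-path connected to the corresponding side of $R(5m, 4m)$. None of these steps looks hard individually, but making the three requirements hold simultaneously, using only the explicit constants $\alpha = 1/64$ and $\beta = 1/12$, is where the technical care of the proof is concentrated.
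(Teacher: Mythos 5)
Your proposal has a genuine gap concerning the placement of the $m$-paths. By definition, an $m$-path lives in the \emph{centered} rectangle $R(m+\beta m,m)$ with endpoints in the fixed targets $[-\beta m,\beta m]\times\{\pm m\}$, and the quasi-crossing $\Q(4m,m)$ requires two such centered paths, one connected to each side of $R(5m,4m)$. You propose to produce the left $m$-path from an arm $\A(m)$ ``placed in the left portion of $R(5m,4m)$,'' explicitly allowing it to live ``in some translated $R(m+\beta m,m)$.'' But an arm translated by order $m$ or more is not an $m$-path in the sense of the definition, so the intersection of your six events is not contained in $\Q(4m,m)$. This cannot be repaired by relaxing the definition: the cascading and closing properties (Lemmas~\ref{lemma:third-relation} and~\ref{lemma:fourth-relation}) use precisely the fact that the $m$-paths of $\Q(4m,m)$ sit in the centered box, where they can be matched against $\Q(m,\ell)$ or $\B(m)$ at later stages of the argument.

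The paper's construction keeps both $m$-paths at the center: the two arms $\A(m)$ are shifted only by $(\pm 4\alpha m,0)$, and the inequality $5\alpha\le\beta$ guarantees they remain $m$-paths. They are then extended \emph{vertically} to the top and bottom of the box $R(3m-3\beta m,4m)$ via arms at scale $5m/2$, and two rotated arms at scale $31m/8$ give horizontal crossings from the left and right of $R(5m,4m)$; the corridor lemma of Section~\ref{sec:topological-lemma-} (with left--right and top--down roles swapped, as in Remark~\ref{rem:1}) forces each horizontal crossing to meet the centered vertical structure. So the decomposition is two $m$-paths plus two vertical extensions plus two horizontal crossings, not three events per side, and the topological forcing is applied once, between a wide horizontal crossing and a full-height vertical wall. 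Your item (iii), left vague, would have to force two off-center vertical and horizontal arms to intersect, a genuinely harder geometric task; once you also insist on centering the $m$-paths, you are steered back to the paper's design.
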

      \begin{remark}
        Notice that the quasi-crossing considered in the lemma above involves two nearby scales. One could try to apply the same strategy to estimate the probability of  quasi-crossings connecting down to a far away scales, but the lower bound would ``degenerate'', and so it would not be useful to obtain a RSW result.
      \end{remark}

\begin{proof}
  For notational convenience, let us consider $m = 4^j$ with $j \ge 2$. The argument for $j \in \{0,1\}$ is identical up to rounding for the choice of translations.
  We begin with constructing two open $m$-paths in the horizontal rectangle $H:=R(m+\beta m,m)$ that are connected to the top side respectively to the bottom side of the vertical rectangle $V:=R(3m-3\beta m,4m)$. To this end (see Figure \ref{fig:illustration-lemma1}), consider the event defined as the intersection of two arm events at scale $m$,
  \begin{equation}\label{eq:27}
    (4\alpha m,0)+\A(m) \quad \text{and} \quad (-4\alpha m,0)+\A(m),
  \end{equation}
  with two arm events at scale $5m/2$,
  \begin{equation}\label{eq:28}
    (0,3m/2)+\A(5m/2) \quad \text{and} \quad (0,-3m/2 )+\A(5m/2).
  \end{equation}
  It is important to notice that the targets of the arms at scale $5m/2$ lie between the targets of the arms at scale $m$. Furthermore, the condition $5\alpha m \le \beta m$ ensures that the two translated arms in \eqref{eq:27} are still $m$-paths. 
  Note that this event implies the existence of two open $m$-paths that are connected to the top and bottom of $V$ respectively. Denoting this event by $\widetilde{\mathcal Q}$, we have
  \begin{equation}
    \label{eq:29}
    \mathbb P[\widetilde{\mathcal Q}]\ge a(m)^2a(5m/2)^2.  
  \end{equation}

  Second, we consider horizontal crossings in the rectangles $[-5m,3m-3\beta m] \times [-4m,4m]$ and $[-3m+3\beta m,5m] \times [-4m,4m]$, whose probabilities are at least $a(31m/8)$ by symmetry. As illustrated in Figure \ref{fig:illustration-lemma1}, intersecting these two horizontal crossings with the event $\widetilde{\mathcal Q}$ guarantees the occurrence of the event $\mathcal Q(4m,m)$. To prove this formally, we can use a variant of the topological result as explained in Remark~\ref{rem:1}. In summary,
  \begin{equation*}
    q(4m,m) \ge  \mathbb P[\widetilde{\mathcal Q}] \cdot a(31m/8)^2 \ge \left(\min_{\ell \in [m,4m]} a(\ell)\right)^6 .
  \end{equation*}
\end{proof}

\subsection{Cascading property of quasi-crossings}
\label{section_proof-of-lemma2}

The following lemma, which relies on the topological result of Lemma~\ref{lem:1}, is the key property of the quasi-crossings. 

\begin{lemma}
	For all $m \ge \ell \ge k \ge 1$, we have
	\begin{equation}\label{eq:14}
	\max \left\{q(m,k),\dfrac{1-\big(1-b(\ell)\big)^{2}}{q(\ell,k)} \right\} \ge 1-\big(1-q(m,\ell)\big)^{1/2}.
	\end{equation}
	\label{lemma:third-relation}
\end{lemma}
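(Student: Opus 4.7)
The strategy combines the square-root trick with positive association and the corridor construction of Lemma~\ref{lem:1}. I would decompose $\Q(m,\ell) = \E^L \cap \E^R$, where $\E^L$ (respectively $\E^R$) is the increasing event that there exists an $\ell$-path in $R(m+3\beta m,m)$ connected to its left (respectively right) side. By symmetry $\P[\E^L] = \P[\E^R]$, and since $\Q(m,\ell) \subseteq \E^L \cup \E^R$, the square-root trick yields
\begin{equation*}
\P[\E^L] \ge 1-(1-q(m,\ell))^{1/2}=:p.
\end{equation*}

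Next, I would translate a copy $\tilde{\Q}_L$ of $\Q(\ell,k)$ to a fixed position adjacent to the left side of $R(m+3\beta m,m)$, and construct $\tilde{\Q}_R$ symmetrically on the right. Since both events are increasing, positive association gives
\begin{equation*}
\P[\E^L \cap \tilde{\Q}_L] \ge p\cdot q(\ell,k).
\end{equation*}
On this intersection, I would apply Lemma~\ref{lem:1} within the translated small rectangle: the two $k$-paths of $\tilde{\Q}_L$ together with their connections to the sides of that rectangle play the role of the corridor walls, while the $\ell$-path of $\E^L$ restricted to the rectangle acts as a top-to-bottom traverse. The resulting dichotomy is: either this traverse contains a $k$-path as a subpath (which, via the $\ell$-path's connection to the left, contributes a $k$-path connected to the left side of $R(m+3\beta m,m)$, i.e., the left arm of $\Q(m,k)$), or it intersects one of the walls, producing a connection from the left side of $R(m+3\beta m,m)$ to the opposite side of the translated rectangle, which in a suitable rectangle of scale $\ell$ is a translate $\tilde{\B}_L$ of $\B(\ell)$. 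A symmetric statement holds on the right with $\tilde{\B}_R$.

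To combine the two halves, I would use FKG on the intersection of the two "enhanced" increasing events produced above, together with $\E^L_k \cap \E^R_k = \Q(m,k)$ (where $\E^L_k$, $\E^R_k$ denote the $k$-scale analogues of $\E^L$, $\E^R$), to obtain a lower bound for $\P[\Q(m,k) \cup \tilde{\B}_L \cup \tilde{\B}_R]$. The stated inequality then follows after rearrangement and using the upper bound $\P[\tilde{\B}_L \cup \tilde{\B}_R] \le 1-(1-b(\ell))^2$, itself a consequence of positive association applied to the two symmetric translates of $\B(\ell)$. The main obstacle is the corridor dichotomy: the translation of $\tilde{\Q}_L$ must be chosen so that the failure branch yields precisely a translate of $\B(\ell)$ (matching its specific target geometry on the outer pieces of the top and bottom of $R(\ell+\beta\ell,\ell)$, not merely an abstract connection to an interior point), and the accounting in the final combination must be tuned to recover the exact factor $1/q(\ell,k)$ that appears in the max, rather than an inferior power.
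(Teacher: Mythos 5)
Your argument has a genuine geometric flaw that prevents the corridor lemma from applying as you describe. By definition, an $\ell$-path lives in the rectangle $R(\ell+\beta\ell,\ell)$ \emph{centered at the origin}; the event $\E^L$ is that such a centered $\ell$-path is connected (by an additional open path) to the left side of $R(m+3\beta m,m)$. When you translate a copy of $\Q(\ell,k)$ to a position adjacent to the left side of the big rectangle, its two $k$-paths and the ``corridor walls'' they generate live near that left boundary, while the $\ell$-path supplied by $\E^L$ is far away at the center. The $\ell$-path therefore does \emph{not} traverse the translated rectangle top to bottom, and Lemma~\ref{lem:1} yields no dichotomy. The ``obstacle'' you flag at the end (matching the target geometry of $\B(\ell)$) is a symptom of this deeper mismatch, not a tuning issue.

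The paper's proof avoids this entirely by keeping $\Q(\ell,k)$ centered at the origin — exactly where the $\ell$-path lives — so the $\ell$-path is automatically a top-to-bottom crossing of $R(\ell+\beta\ell,\ell)\subset R(\ell+3\beta\ell,\ell)$ and the corridor lemma applies directly. The other structural difference is the choice of dichotomy. Rather than splitting into left/right halves $\E^L,\E^R$ (which both must occur for $\Q(m,\ell)$, so the square-root trick is wasted here — one has $\P[\E^L]\ge q(m,\ell)$ anyway), the paper splits according to whether the open $\ell$-path contains a $k$-path as a subpath. Writing $\E$ for the increasing event ``there exists an open $\ell$-path with no $k$-subpath,'' one has $\Q(m,\ell)\subset \Q(m,k)\cup\E$, and the square-root trick then produces the max over $q(m,k)$ and $\P[\E]$. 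FKG gives $\P[\E]\le \P[\E\cap\Q(\ell,k)]/q(\ell,k)$, and on $\E\cap\Q(\ell,k)$ the corridor lemma forces the bad $\ell$-path to hit one of the two walls, producing a translate $(\pm 2\beta\ell,0)+\B(\ell)$; a final FKG application on the complements gives the bound $1-(1-b(\ell))^2$. Your per-side intersections $\E^L\cap\tilde\Q_L$, $\E^R\cap\tilde\Q_R$ do not recombine into this form, since $q(m,k)$ requires both $k$-arms simultaneously while your two dichotomies hold only on separate conditionings.
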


\begin{figure}
	\centering
	\begin{minipage}{.5\textwidth}
		\centering
		\includegraphics[width=\linewidth]{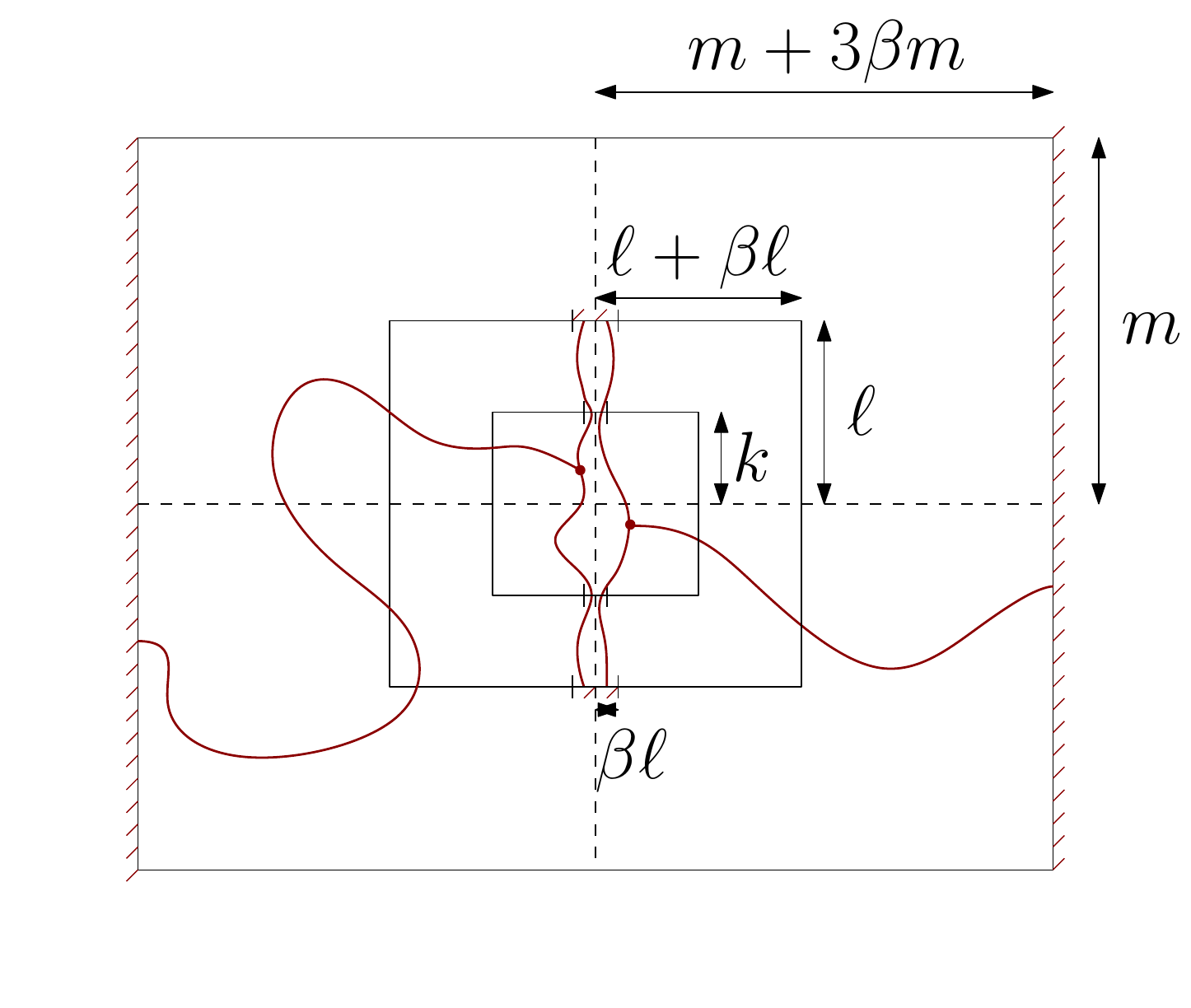}
	\end{minipage}%
	\begin{minipage}{.5\textwidth}
		\centering
		\includegraphics[width=0.9\linewidth]{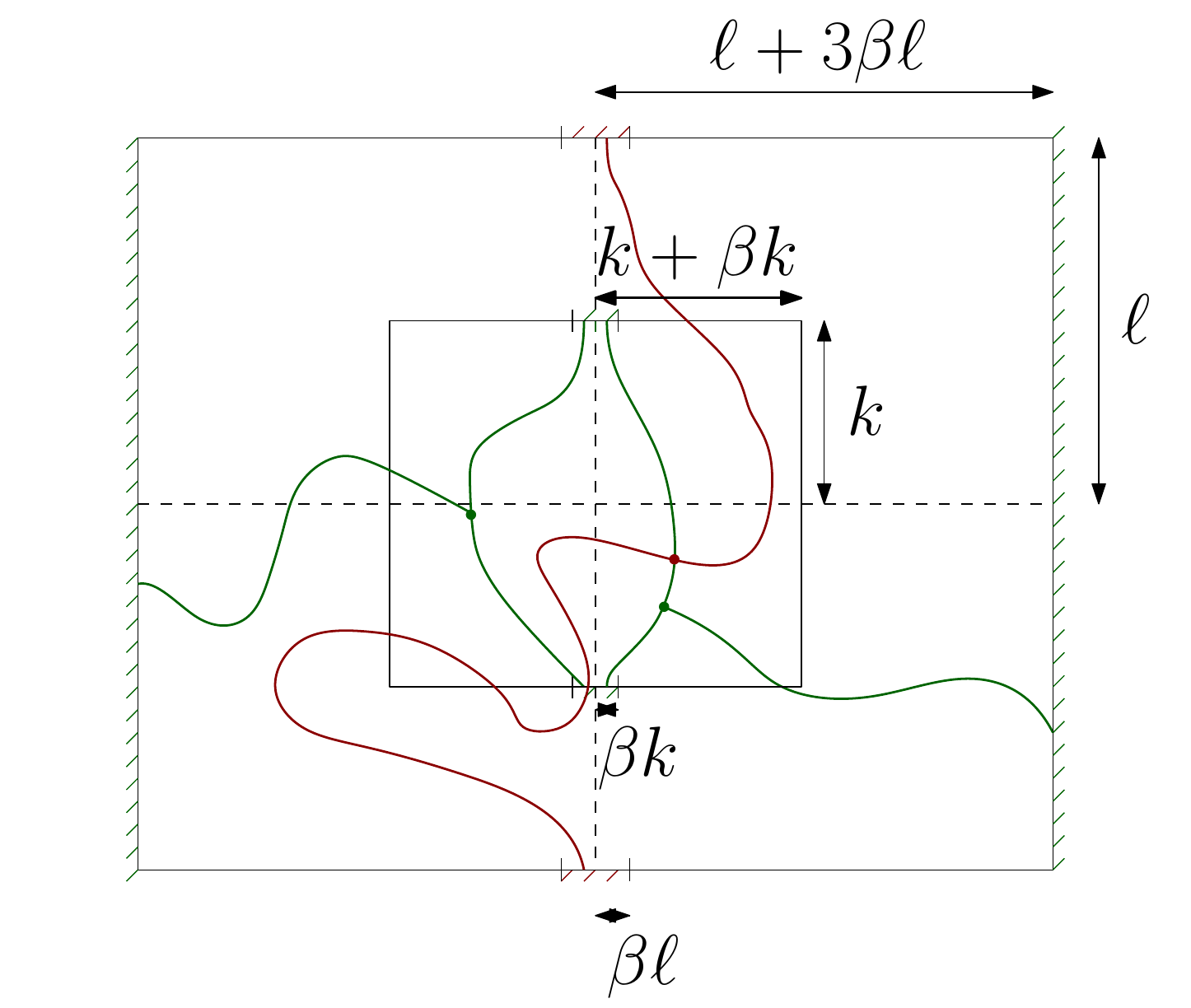}
	\end{minipage}
	\caption{Illustrations of the corridor construction in Lemma \ref{lemma:third-relation}. Left: the event $\mathcal{Q}(m,\ell)$ illustrated with two open $\ell$-paths that both contain a $k$-path. Right: the event $\E \cap \mathcal{Q}(\ell,k)$ with $\gamma$ illustrating an open $\ell$-path that contains \emph{no} $k$-path. }
	\label{fig:illustration-lemma2}
      \end{figure}
      The statement above  can be interpreted as follows: Assume that crossings at scale $m$ reach scale $\ell$, and crossings at scale  $\ell$ reach scale $k$ (for concreteness, say $q(m,\ell)\ge c_0$ and $q(\ell,k)\ge c_1$ for some  constants $c_0,c_1>0$). If the RSW statement does not hold at the intermediate scale $\ell$ (say $b(\ell)<c_0c_1/4$), then   crossings at scale $m$ reach scale $k$ (i.e. $q(m,k)>c_0/2$). It will be important later to notice that the bound we obtain on $q(m,k)$ in this way does not depend on $q(\ell,k)$. 
      
\begin{proof} Let $m \ge \ell \ge k \ge 1$.
  Assume that the quasi-crossing event $\mathcal{Q}(m,\ell)$ occurs. By definition, there exist two open $\ell$-paths that are connected to the left side respectively to the right side of the rectangle $R(m + 3\beta m ,m)$. If each of these two $\ell$-paths contains a $k$-path as a subpath, then the event $\mathcal{Q}(m,k)$ occurs (see Figure \ref{fig:illustration-lemma2}). Otherwise, there exists an open $\ell$-path that contains \emph{no} $k$-path, and we denote this event by $\E$. In summary,
  \begin{equation*}
    \mathcal{Q}(m,\ell) \subset \mathcal{Q}(m,k) \cup \E.
  \end{equation*}
  Note that the event $\mathcal E$ is increasing. To see this, consider the set $\Gamma$ of all $\ell$-paths that contains \emph{no} $k$-path, and notice that $\mathcal E$ is simply the  event that there exists a path in $\Gamma$ which is  open.  The square-root-trick implies
  \begin{equation}
    \max\left\{q(m,k),\P\left[\E\right]\right\} \ge 1-\big(1-q(m,\ell)\big)^{1/2}.\label{eq:26}
  \end{equation}
  We shall have established the lemma if we show that the probability of $\mathcal E$ is smaller than the second term in the max in~\eqref{eq:14}. First, by positive association we have 
  \begin{equation}\label{eq:25}
    \mathbb P[\mathcal E]\le \frac{\mathbb P[\mathcal E\cap \mathcal Q(\ell,k)]}{q(\ell,k)}. 
  \end{equation}
  Now, assume that  $\mathcal E\cap \mathcal Q(\ell,k)$ occurs. By definition, there exist two open $k$-paths $\gamma$ and $\gamma'$ that are connected to the left and  right sides of $S=R(\ell+3\beta \ell,\ell)$ respectively. Furthermore there exists an open $\ell$-path $\pi$ that contains no $k$-path. By the topological result of Lemma~\ref{lem:1}, the path $\pi$ must be  connected to the
 left or to the right side of $S$.  In both cases, a translate of the bridge event $\mathcal B(\ell)$ must occur. More precisely, we obtain
  \begin{equation*}
    \E \cap \mathcal{Q}(\ell,k) \subset \left((-2\beta\ell,0)+\B(\ell)\right) \cup \left((2\beta\ell,0)+\B(\ell)\right).
  \end{equation*}
  We conclude by using positive association and symmetry that
  \begin{equation}
    \label{eq:24}
   \mathbb P[\mathcal E \cap \mathcal{Q}(\ell,k)] \le 1-\mathbb P[ \left((-2\beta\ell,0)+\B(\ell)\right)^c \cap \left((2\beta\ell,0)+\B(\ell)\right)^c]\le 1-(1-b(\ell))^2.
 \end{equation}
 First plugging  this bound into \eqref{eq:25} and then the resulting inequality into \eqref{eq:26} concludes the proof.
\end{proof}

\subsection{Closing property of quasi-crossings}
\begin{figure}
	\centering
	\includegraphics[width=.45\linewidth]{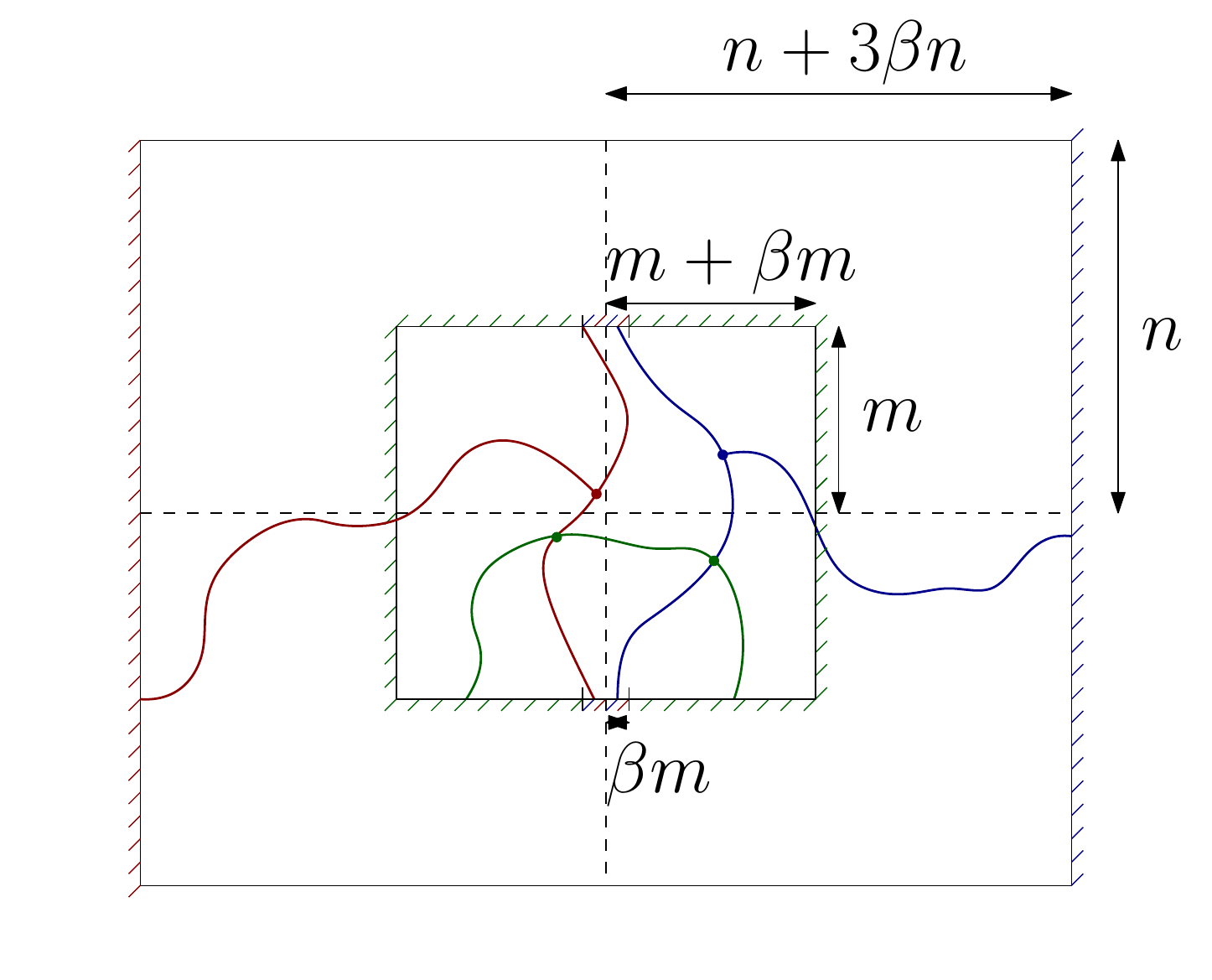}
	\caption{Illustration of Lemma \ref{lemma:fourth-relation}}
	\label{fig:illustration-lemma3}
\end{figure}
Finally, we construct a bridge at scale $n$ by intersecting a quasi-crossing at scale $n$ that connects down to scale $m$, with a bridge at scale $m$. This implies the following relation, whose proof follows readily from positive association (see Figure \ref{fig:illustration-lemma3}).
\begin{lemma}
	For all $m \ge \ell \ge 1$, we have
	\begin{equation*}
		b(m) \ge q(m,\ell) \cdot b(\ell).
	\end{equation*}
	\label{lemma:fourth-relation}
\end{lemma}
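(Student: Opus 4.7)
The plan is to apply positive association (FKG) to a direct geometric construction combining the quasi-crossing event with a suitable translate of the bridge event at scale $\ell$.

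Specifically, I would identify a translate $\widetilde{\B}(\ell)$ of the bridge event $\B(\ell)$ such that the deterministic inclusion $\Q(m,\ell) \cap \widetilde{\B}(\ell) \subseteq \B(m)$ holds. Intuitively, the two open $\ell$-paths in $\Q(m,\ell)$, together with their open connections to the left and right sides of the larger rectangle $R(m + 3\beta m, m)$, provide ``arms'' reaching the left and right parts of the smaller rectangle $R(m + \beta m, m)$; the translated bridge $\widetilde{\B}(\ell)$ then supplies the missing horizontal link between the two $\ell$-paths, so that concatenating these three pieces yields a left-to-right open crossing of $R(m + \beta m, m)$, which is exactly the event $\B(m)$.

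Once this inclusion is in hand, FKG applied to the two increasing events, together with translation invariance, gives
\[
b(m) \;\ge\; \P\!\left[\Q(m, \ell) \cap \widetilde{\B}(\ell)\right] \;\ge\; \P[\Q(m, \ell)] \cdot \P[\widetilde{\B}(\ell)] \;=\; q(m, \ell) \cdot b(\ell),
\]
which is the desired inequality.

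The main (mild) obstacle is making the geometric inclusion rigorous: since the two $\ell$-paths in $\Q(m, \ell)$ have no canonical position within the larger rectangle, the translate $\widetilde{\B}(\ell)$ must be chosen carefully -- presumably to a canonical central placement -- and one may need a short topological argument (in the spirit of the corridor Lemma \ref{lem:1}) to verify that the closing link between the two $\ell$-paths is realised for every admissible realisation. Nevertheless, the paper's comment that the conclusion ``follows readily from positive association'' indicates that once the correct picture is set up (as in Figure \ref{fig:illustration-lemma3}), the geometric verification reduces to reading off a single construction, so the entire proof amounts to one application of FKG.
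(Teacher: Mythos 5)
Your proof is correct and follows exactly the route the paper intends: the paper itself disposes of this lemma in one line by citing positive association and Figure~\ref{fig:illustration-lemma3}, and the underlying content is precisely your inclusion $\Q(m,\ell)\cap\B(\ell)\subseteq\B(m)$ plus FKG. One small simplification: no translate of $\B(\ell)$ is needed, since the $\ell$-paths in $\Q(m,\ell)$ already live in the centred rectangle $R(\ell+\beta\ell,\ell)$, which is exactly where $\B(\ell)$ is defined, and the Jordan-curve argument showing that any such bridge hits both $\ell$-paths is the ``corridor''-style check you anticipated.
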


\section{Proof of the main theorem}
\label{sec:proof-main-theorem}

In this section, we prove Theorem \ref{thm_full-RSW} based on Lemmas \ref{lemma:first-relation}--\ref{lemma:fourth-relation} and their corresponding dual results.

\subsection{Increasing homeomorphisms}
\label{sec:incr-home}

For integers $i \ge 0$, we define increasing homeomorphisms $f_{i}:[0,1]\to [0,1]$ by
\begin{equation}
	f_{i} (x) = \left(1-(1-x)^{1/2000^i}\right)^{2000^i}.
\end{equation}
Let us collect some elementary properties of these homeomorphisms that will be useful in the proof of Theorem \ref{thm_full-RSW}.

As a direct consequence of the definition, we have the comparison $f_{i} \ge f_{j}$ for $i \le j$. Similarly, for every $x\in [0,1]$ and  $i\ge 0$,
\begin{equation} \label{eq:20}
	f_i(x) \ge \sqrt{f_{i+1}(x)}.
\end{equation}
To exploit duality in the proof of Theorem \ref{thm_full-RSW}, we will use that for every $x,y\in[0,1]$ and $i\ge 0$,
\begin{equation} \label{eq:22}
	x \ge f_i(1-y) \iff y \ge f_i(1-x),
\end{equation}
which follows from the fact that $(1-f_i)\circ (1-f_i) = \textrm{id}$.
Furthermore, for $i\ge 0$, it will be convenient to use the comparison\footnote{Actually, it is possible but maybe less straightforward to prove the more general comparison $f_{i} \circ f_j \ge f_{i+j}$ for all $i,j\ge 0$.} 
\begin{equation}\label{eq:21}
	f_i \circ f_i \ge f_{2i},
\end{equation}
which follows from the inequality $\left(1-(1-x)^{1/a}\right)^a \le 1-\left(1-x^a\right)^{1/a}$ that is valid for  $x \in [0,1]$ and $a\ge 1$. The inequality itself can be proven by elementary means using the change of variable $x'=(1-x)^{1/a}$.

Finally, let us show that for every $x\in[0,1]$,
\begin{equation}
    \label{eq:13}
	\frac{1-\left(1-\!f_1^{-1} \circ f_{18}\big(1-x\big)\right)^2}{f_{4}\left(1- f_1^{-1}\big(x)\big)\right)} 
	\le f_{8}\big(1-x\big).
\end{equation}
 First, note that $ 1-(1-y)^2 \le 1-(1-y^{1/2000})^{2000} = f_1^{-1}(y) $ for $y\in[0,1]$, and so the nominator is bounded from above by 
\begin{equation}
 f_1^{-1}\circ f_1^{-1} \circ f_{18}\big(1-x\big) \le f_2^{-1} \circ f_{18}\big(1-x\big) \le f_{9}\big(1-x\big)\le f_{8}\big(1-x\big)^2,
\end{equation}
where we have used \eqref{eq:21} in the first inequality and \eqref{eq:21} together with $f_2 \ge f_9$ in the second inequality. Second, we note that $1-f_1^{-1}(x)=f_1(1-x) \ge f_4(1-x)$ and so the denominator is bounded from below by $f_4 \circ f_4(1-x) \ge f_8(1-x)$,
where we used again \eqref{eq:21}. Combining the two bounds implies \eqref{eq:13}.

 \subsection{Proof of Theorem~\ref{thm_full-RSW}}
\label{sec:proof-theorem}

	

Let  $\B^\star(n)$ denote the dual bridge event, defined exactly as the bridge event, but replacing the primal connection by a dual connection, and write $b^\star(n)=\mathbb P[\mathcal B^\star(n)]$. Since the dual model satisfies the same hypotheses as the primal one (symmetries and positive association), the relations in Lemmas \ref{lemma:first-relation} and \ref{lemma:second-relation}--\ref{lemma:fourth-relation} hold analogously for dual events. 

We begin with reducing the proof to a relation between bridges and dual bridges. Namely, we claim that it suffices to establish\footnote{By duality, the quantity $1-b^\star(n)$ represents  the probability of a  variant of the primal arm event where the target size is $\beta n$ instead of $\alpha  n$. Therefore, the relation \eqref{eq:reduced-statement} can be interpreted as a a lower bound for the bridge probability in terms of an  arm probability.} 
  \begin{equation}
    b(n) \ge f_{20}\big(1-b^\star(n)\big) \tag{$\ast$}
    \label{eq:reduced-statement}
  \end{equation}
along the sequence $n = 4^i$, $i\ge 0$. To see this, we relate long rectangle crossing probabilities and bridge event probabilities using Lemma \ref{lemma:first-relation}. For $n=4^i$, we have 
\begin{equation}
	\P\left[\C(8n,n)\right] \ge f_1\left(b(n)\right) \ge f_1 \circ f_{20}\left(1-b^\star(n)\right) \ge f_1 \circ f_{20}\left(1- f_1^{-1}\left(\P\left[\C^\star(8 n,n)\right]\right)\right).
\end{equation} 
Then the duality relation~\eqref{eq:7} implies
\begin{equation*}
\P\left[\C\left(8 n, n\right)\right] \ge \psi_2 \big(\P\left[\C\left(n,8 n\right)\right]\big), 
\end{equation*}
where $\psi_2:[0,1]\to[0,1]$ is defined by $\psi_2(x)=f_1 \circ f_{20}(1-f_1^{-1}(1-x))$. For general $n$, we obtain $\P[\C(2 n, n)] \ge \psi_2 (\P[\C(n,2 n)])$ by inclusion of events, which completes the deduction of Theorem \ref{thm_full-RSW} from \eqref{eq:reduced-statement} for $\rho=2$. For general $\rho$, we conclude using the standard gluing construction (see part \textit{(iii)} in  Lemma \ref{lemma:first-relation}).

The remainder of this section is devoted to proving (\ref{eq:reduced-statement}). We consider $n$ of the form $n = 4^i$, $i \ge 0$, and we choose
\begin{equation*}
    m_0 = \max\left\{m=4^j \le n : b(m) \ge f_{18}\big(1-b^\star(m)\big)\right\},
\end{equation*}
the last scale below $n$ at which we have the stronger relation.
The scale $m_0$ is well-defined since $b(1) \ge f_1(1-b^\star(1))$.  To see this, just observe that $b(1)\ge p^2$ and $b^\star(1) \ge (1-p)^3$, where $p$ denotes the probability of a single edge to be open.
\begin{remark} \label{rk:exchanging-roles}
	We emphasize  that   $b$ and $b^\star$ play symmetric roles in the definition of $m_0$,  by  \eqref{eq:22}. As a consequence, the roles of primal and dual can be exchanged in the proof below, which will be important at the end of the proof. 
\end{remark}

If $m_0 = n$, we are already done. Otherwise, we consider the  intermediate scales $m_k:=4^km_0$, $k=0,\ldots ,K$, where  $K\ge 1$ is such that  $n=m_K$, and we show inductively that for all $1\le k\le K$,
  \begin{equation}\label{eq:4}
    q(m_k,m_0) \ge f_4\big(1-b^\star(m_k)\big).
  \end{equation}
First, by Lemma \ref{lemma:second-relation}, we have  $q(m_k,m_{k-1})\ge \min_{\ell \in [m_{k-1},m_k]}a(\ell)^6$ for every $1\le k\le K$, and  by Lemma~\ref{lemma:first-relation}, we have for every $\ell\in [m_{k-1},m_k]$,
\begin{equation}
  \label{eq:5}
  a(\ell)\ge f_1\big(\P[\C(\ell,8\ell)]\big)\ge f_1\big(\mathbb P[\mathcal C(m_k,2m_k)]\big)=f_1\big(1-\mathbb P[\mathcal C^\star(2m_k,m_k)]\big).
\end{equation}
 Using that $b^\star(m_k)\ge\mathbb P[\mathcal C^\star(2m_k,m_k)]$, we  obtain for every $1\le k\le K$, 
\begin{equation}
		q(m_k,m_{k-1}) \ge f_{2}\big(1-b^\star(m_k)\big). \label{eq:19}
\end{equation}
In particular, this proves~\eqref{eq:4} for $k=1$.

Now, let us move to the induction step. Let $2\le k\le K $ and assume that \eqref{eq:4} holds for $k-1$. We use the cascading property of quasi-crossings: by applying Lemma \ref{lemma:third-relation} to the triple $(m_{k},m_{k-1},m_0)$, we obtain 
\begin{equation}
	\max \left\{q(m_k,m_0),\dfrac{1-\big(1-b(m_{k-1})\big)^{2}}{f_{4}\big(1-b^\star(m_{k-1})\big)} \right\} \ge f_{4}\big(1-b^\star(m_k)\big),
\end{equation}
where we have used the bound $q(m_{k-1},m_0) \ge f_{4}(1-b^\star(m_{k-1}))$ from the induction hypothesis on $k-1$ in the denominator and the bound $q(m_k,m_{k-1}) \ge f_{2}(1-b^\star(m_k))$ from \eqref{eq:19} together with the comparison \eqref{eq:21} on the right hand side.
To complete the induction, we observe that the second term in the maximum is bounded by
\begin{align}
	 \frac{1\!-\!\left(1\!-\!b(m_{k-1})\right)^{2}}{f_{4}\big(1-b^\star(m_{k-1})\big)}  \le \frac{1\!-\!\big(1\!-\!f_1^{-1}(b(m_{k}))\big)^2}{f_{4}\left(1- f_1^{-1}\big(b^\star(m_{k})\big)\right)} \le \frac{1\!-\!\left(1\!-\!f_1^{-1} \circ f_i\big(1-b^\star(m_{k})\big)\right)^2}{f_{4}\left(1- f_1^{-1}\big(b^\star(m_{k})\big)\right)} ,
\end{align}
which is strictly smaller than $f_{4}(1-b^\star(m_{k}))$ due to \eqref{eq:13}.
Here, we used Lemma \ref{lemma:first-relation} via $f_1(b(m_{k-1}))\le \mathbb P[\mathcal C(8m_{k-1},m_{k-1})]\le b(m_k)$ and $f_1( b^\star(m_{k-1}))\le b^\star(m_{k})$ in the first inequality, and $b(m_{k}) \le f_i(1-b^\star(m_{k}))$ in the second inequality.

By \eqref{eq:4} applied to $k=K$, we have $q(n,m_0) \ge f_{4}(1-b^\star(n)) \ge \sqrt{f_{18}(1-(b^\star(n))}\ge \sqrt{b(n)}$, using \eqref{eq:20}.  This bound and  Lemma~\ref{lemma:fourth-relation}  imply  $b(n)\ge q(n,m_0)b(m_0)\ge  \sqrt{b(n)}b(m_0)$, and we obtain the following relation between the bridge probabilities at scale $n$  and $m_0$:  
\begin{equation}
  b(n)  \ge  b(m_0)^2.
\end{equation}
Now, as mentioned in Remark \ref{rk:exchanging-roles}, the roles of $b$ and $b^\star$ can be exchanged since the condition $b(m) \ge f_{18}(1-b^\star(m))$ is equivalent to $b^\star(m) \ge f_{18}(1-b(m))$. Hence, we can obtain  the relation  $  b^\star(n)  \ge  b^\star(m_0)^2$ following the same lines as above by using the dual versions of Lemmas~\ref{lemma:first-relation}-\ref{lemma:fourth-relation}.
Combining the relations between bridge probabilities at scale $n$ and $m_0$ for primal and dual with the stronger relation $b(m_0) \ge f_{18}(b^\star(m_0))$ at scale $m_0$ implies 
\begin{equation}
	b(n) \ge b(m_0)^2 \ge f_{18}\left(1-b^\star(m_0)\right)^2 \ge f_{20}\left(1-b^\star(n)\right),
\end{equation}
which completes the proof of \eqref{eq:reduced-statement} and thereby of Theorem \ref{thm_full-RSW}.

\section{Extensions for finite-volume measures}
\label{section_extensions}

Until now, we have considered positively associated measures that are invariant under the symmetries of $\Z^2$. For many possible applications, the latter assumption of full invariance under symmetries is too restrictive and it is particularly unsuitable for  applications to finite-volume measures, where RSW results play an important role. In this section, we therefore drop the assumption of invariance under symmetries and present extensions based on weaker assumptions.

While we have tried to keep the use of invariance under symmetries to a minimum in the proof of Theorem \ref{thm_full-RSW}, we have used it crucially at two places: First, it was important in Lemma \ref{lemma:first-relation} to relate the probabilities of long rectangle crossings and bridge events, and analogously of short rectangle crossings and arm events (via its dual statement). Second, quasi-crossings have been constructed in Section \ref{section_proof-of-lemma1} by intersecting translated and $\pi/2$-rotated versions of arm events at different scales.
In Subsection \ref{subsection_stochastic-domination}, we introduce symmetric stochastic domination and show that it can be used to replace the use of invariance under symmetries in both situations described above. In Subsection \ref{subsection_uniform-bound}, we take a different route by assuming a uniform lower bound on probabilities of arm events. This assumption is clearly sufficient to construct quasi-crossings as in Section \ref{section_proof-of-lemma1}, but it does not enable us to directly relate the probabilities of long crossings and bridge events as in Lemma \ref{lemma:first-relation}. Interestingly, it is nonetheless sufficient to prove a uniform lower bound on probabilities of long crossings.

For $n\ge 1$, we denote by $(\Lambda_n,E_n)$ the subgraph of $(\Z^2,\mathbb{E}^2)$ induced by $[-n,n]^2$. Throughout this section, we consider bond percolation on $(\Lambda_n,E_n)$, i.e.~probability measures on the space of percolation configurations $\{0,1\}^{E_n}$. In order to reuse previously introduced notation, we make the identification between events in $\{0,1\}^{E_n}$ and events in $\{0,1\}^{\mathbb{E}^2}$ that only depend on edges in $E_n$.

\subsection{Symmetric stochastic domination}
\label{subsection_stochastic-domination}

A symmetry $\sigma \in \Sigma$ is called \emph{$n$-admissible for an event $\E$} if $\sigma \boldsymbol{\cdot} \E$ only depends on edges in $E_n$.
Given two probability measures $\P$, $\mathbb{Q}$ on the space of percolation configurations $\{0,1\}^{E_{n}}$, we write $\P \gg_n \mathbb{Q}$ if 
\begin{equation*}
	\P[\sigma\boldsymbol{\cdot}\E] \ge  \mathbb{Q} [\tau\boldsymbol{\cdot}\E]
\end{equation*}
for all increasing events $\E$ and all $n$-admissible $\sigma, \tau \in \Sigma$. Note that this implies $\P[\sigma \boldsymbol{\cdot} \F] \le \mathbb{Q}[\tau \boldsymbol{\cdot} \F]$ for decreasing events $\F$. Within the framework of symmetric stochastic domination, we obtain the following generalization of Theorem \ref{thm_full-RSW}.
\begin{theorem}
	Let  $\P^{+}$, $\P$, $\P^{-}$ be probability measures on  $\{0,1\}^{E_{2n}}$ satisfying positive association and $\P^{+} \gg_{2n} \P \gg_{2n} \P^{-}$. There exists a homeomorphism $\psi : [0,1] \to [0,1]$ such that
	\begin{equation*}
		\P^{+}[\C(2 n,n)] \ge \psi \big(   \P^{-}[\C(n,2 n)]\big).
	\end{equation*} \label{thm_stochastic-domination}
\end{theorem}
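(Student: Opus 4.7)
The plan is to retrace the proof of Theorem~\ref{thm_full-RSW}, replacing each invocation of invariance of a single measure under $\Sigma$ with an application of the symmetric stochastic domination chain $\P^+ \gg_{2n} \P \gg_{2n} \P^-$. Two mechanisms take over for invariance. First, to lower-bound the probability of a specific translated or rotated image $\sigma\boldsymbol{\cdot}\E$ of an increasing event under $\P^+$, we use the direct comparison $\P^+[\sigma\boldsymbol{\cdot}\E] \ge \P^-[\tau\boldsymbol{\cdot}\E]$ valid for any admissible $\tau$. Second, whenever we apply FKG or the square-root-trick to a union of symmetric copies of an event, we apply it under $\P^-$ to extract the best copy, and then transfer the resulting lower bound to any copy under $\P^+$ via the domination. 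A preliminary observation is that planar duality reverses the chain: $(\P^-)^\star \gg_{2n} \P^\star \gg_{2n} (\P^+)^\star$, which plays exactly the role of the primal-dual exchange used in Remark~\ref{rk:exchanging-roles}.

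The main task is to re-derive analogues of Lemmas~\ref{lemma:first-relation}--\ref{lemma:fourth-relation}. In each case, the original proof combines FKG (which is assumed for each of the three measures) with invariance of a single measure. In the new setting, FKG under $\P^+$ still yields a product lower bound on an intersection of translated or rotated increasing events; each factor is then bounded below by a probability under $\P^-$ of a fixed representative via the chain. The topological Lemma~\ref{lem:1} is deterministic and unchanged. Typical resulting statements take the form $\P^+[\C(8n,n)] \ge f_1(b^-(n))$ and $a^+(n) \ge f_1(\P^-[\C(n,8n)])$ (analogues of Lemma~\ref{lemma:first-relation}), $q^+(4m,m) \ge (\min_{\ell \in [m,4m]} a^-(\ell))^6$ (analogue of Lemma~\ref{lemma:second-relation}), and similarly for the cascading and closing properties, where the superscripts $\pm$ are placed so that the left-hand side is a lower bound under $\P^+$ (or, dually, an upper bound under $\P^-$). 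At each step one must verify that the symmetries applied are $2n$-admissible, i.e., their images stay inside $E_{2n}$; this is automatic since all events under consideration are contained in rectangles of dimensions at most $2n$.

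With these lemma-analogues in hand, the induction of Section~\ref{sec:proof-theorem} carries through essentially verbatim. Pick $m_0 \le n$ along the geometric sequence $4^j$ as the last scale at which the stronger relation $b^+(m_0) \ge f_{18}(1 - (b^-)^\star(m_0))$ holds; its existence at scale $1$ follows as in the invariant case, now using the bounds on edge marginals implied by $\P^+ \gg_{2n} \P^-$ together with the inequality $f_1 \ge f_{18}$. The quasi-crossing bound $q^+(m_k,m_0) \ge f_4(1 - (b^-)^\star(m_k))$ then propagates up the scales $m_k = 4^k m_0$ by the cascading inequality, and the closing lemma yields $b^+(n) \ge b^+(m_0)^2$. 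The dual chain $(\P^-)^\star \gg_{2n} (\P^+)^\star$ lets the mirror argument produce $(b^-)^\star(n) \ge (b^-)^\star(m_0)^2$. Combining these with the stronger relation at $m_0$ and the manipulations of the $f_i$ (including \eqref{eq:13}, \eqref{eq:20}, \eqref{eq:21}) gives $b^+(n) \ge f_{20}(1 - (b^-)^\star(n))$, which via the $\P^\pm$-analogue of Lemma~\ref{lemma:first-relation} and the duality relation~\eqref{eq:7} translates into $\P^+[\C(2n,n)] \ge \psi(\P^-[\C(n,2n)])$ for an explicit homeomorphism $\psi : [0,1]\to[0,1]$.

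The main obstacle is the careful bookkeeping of which measure sits on each side of each inequality, and of the way the chain is swapped under planar duality. The notion of symmetric stochastic domination has been designed precisely to absorb the combination of FKG, square-root-trick, and invariance that drives the proof of Theorem~\ref{thm_full-RSW}, so no new geometric ingredient is required; once the measure-bookkeeping is fixed, the argument is a systematic transcription of the one in Section~\ref{sec:proof-main-theorem}.
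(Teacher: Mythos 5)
Your proposal captures the paper's strategy accurately: replace every use of invariance by an application of the chain $\P^+\gg_{2n}\P\gg_{2n}\P^-$, observe that duality reverses the chain, transpose Lemmas~\ref{lemma:second-relation}--\ref{lemma:fourth-relation} to this framework, and then run the same scale induction as in Section~\ref{sec:proof-main-theorem}, culminating in a relation of the form $b(n)\ge f_{20}(1-b^\star(n))$ stated for the extremal measures. This is exactly what the paper does, so your argument is essentially a re-derivation rather than a different route.

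Two bookkeeping remarks. First, the paper finds it cleaner to keep the \emph{middle} measure $\P$ in play: the quantities $\ubar{a}(m)$, $\bar{b}(m)$ and $q(m,\ell)$ are all defined under $\P$ (with a $\min$ or $\max$ over $2n$-admissible symmetries), the analogues (L.3)--(L.5) are stated entirely under $\P$, and $\P^\pm$ enter only in the definition of $m_0$ and in the final transfer $\P^+[\sigma\boldsymbol{\cdot}\Q(n,m_0)]\ge q(n,m_0)$ and its dual. Your version, which puts $\P^+$ on the left and $\P^-$ on the right of every lemma-analogue, also works by transitivity of $\gg_{2n}$, but the cascading inequality \eqref{eq:14} involves $q$ on both sides of the bound and is most naturally handled under a single measure, which is presumably why the authors keep $\P$ around. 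Second, there is a scale slip in the sketch: the claimed analogue $\P^+[\C(8n,n)]\ge f_1(b^-(n))$ is not even well posed on $E_{2n}$, since $\C(8n,n)$ lives outside $\Lambda_{2n}$. The paper sidesteps Lemma~\ref{lemma:first-relation} entirely here and instead reduces via the standard gluing to the statement $(\ast\ast)$ about $\C(n+3\beta n,n)$, which does fit in $\Lambda_{2n}$ precisely because $\beta=1/12<1/3$; in your transcription the Lemma~\ref{lemma:first-relation}-type bounds should only be invoked at scales $m$ with $8m\le 2n$, or better, avoided in favour of the $(\ast\ast)$-style reduction.
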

As in Theorem \ref{thm_full-RSW}, the homeomorphism $\psi$ is universal: it neither depends on the scale $n$ nor on the probability measures.

Before proving the statement, let us illustrate one application in the  framework of FK-percolation.
For every  finite subset $S\subset \mathbb Z^2$,  consider the FK-percolation measure $\phi_S$  (with fixed edge density $p\in[0,1]$ and cluster weight $q\ge1$) on the graph induced by $S$. This corresponds to the FK-measure in $S$ with free boundary conditions  (see \cite{MR2243761}, or \cite{MR4043224} for background on FK-percolation). Given a symmetry $\sigma\in \Sigma$, it follows directly from the definition of FK-percolation that the measures are \emph{consistent}, in the sense that the measure in $\sigma\cdot S$ coincides with the image of $\phi_S$ under $\sigma$. Furthermore, those measures are \emph{ordered}: \begin{equation}
  \label{eq:31}
  \phi_T(\mathcal A)\ge \phi_S(\mathcal A)
\end{equation}
for every $S\subset T$, and every increasing event $\mathcal A$ in $S$. This corresponds to the idea that the free boundary conditions have a negative effect, and that pushing the free boundary conditions away increases the probability of increasing events (see \cite{duminil2019renormalization} for more details on this phenomena). This implies that for every increasing event $\mathcal A$ depending on the edges in $\Lambda_n$ and every $n$-admissible symmetry $\sigma$ we have
\begin{equation}
  \label{eq:32}
\phi_{\Lambda_{2n}}(\sigma\cdot \mathcal A)\ge \phi_{\sigma\cdot \Lambda_n}(\sigma \cdot \mathcal A) =\phi_{\Lambda_n}(\mathcal A).
\end{equation}
By applying Theorem~\ref{thm_stochastic-domination} to the measures on  $\{0,1\}^{E_{2n}}$ induced by $\phi_{\Lambda_{2n}}$,   $\phi_{\Lambda_{4n}}$ and  $\phi_{\Lambda_{6n}}$, we obtain
\begin{equation}
  \label{eq:33}
  \phi_{\Lambda_{6n}}[\C(2 n,n)] \ge \psi \big( 	\phi_{\Lambda_{2n}}[ \C(n,2n)] \big).
\end{equation}
In other words,  up to pushing slightly the boundary conditions, short and long crossings  for FK-percolation behave the same way. This was already known for FK-percolation using different methods (see Proposition 8 in \cite{duminil2019renormalization}), and Theorem~\ref{thm_stochastic-domination} above generalizes this result to any  consistent, ordered family of positively associated measures.  

\begin{proof}[Proof of Theorem~\ref{thm_stochastic-domination}]
	Without loss of generality, we consider $n$ of the form $n=4^i$. Throughout the proof, all symmetries will be assumed to be $n$-admissible for the respective events. We claim that it suffices to prove for any $\sigma,\tau \in \Sigma$,
	\begin{equation}
		\tag{$\ast\ast$} \label{eq:reduced-statement-stochastic-domination}
		\P^{+}\left[\sigma \boldsymbol{\cdot} \C(n + 3\beta n,n)\right] \ge f_{20}\left(1-\P^{-}\left[\tau \boldsymbol{\cdot} \C^\star(n+ 3\beta n,n)\right]\right).
	\end{equation}
	Indeed, to deduce Theorem \ref{thm_stochastic-domination} from this statement, we note that the standard gluing of long rectangle crossings (as in part \textit{(iii)} of Lemma \ref{lemma:first-relation}) implies 
	\begin{equation*}
		\P^{+}[\C(2n,n)]\ge (\min_\sigma \P^{+}[\sigma \boldsymbol{\cdot} \C(n+3\beta n,n)])^{7}
	\end{equation*} 
	as well as the analogous inequality for long dual crossings. This yields the theorem with  the universal homeomorphism $\psi = f_{21}$.
	
	To prove \eqref{eq:reduced-statement-stochastic-domination}, we follow Section \ref{sec:proof-main-theorem} closely and begin with stating Lemmas \ref{lemma:second-relation}--\ref{lemma:fourth-relation} in the framework of symmetric stochastic domination. For $m \in [1,n]$, we denote
	\begin{equation*}
		\ubar{a}(m):=\min_{\sigma} \P\left[\sigma \boldsymbol{\cdot} \A(m)\right], \quad \bar{b}(m):=\max_{\sigma} \P\left[\sigma \boldsymbol{\cdot} \B(m)\right],
	\end{equation*}
	and as before for $\ell \le m$, $q(m,\ell)=\P[\Q(m,\ell)]$. 
	As in Sections \ref{section_proof-of-lemma1} and \ref{section_proof-of-lemma2}, it holds that for $m=4^j \in [1,n/4]$,
	\begin{equation}
		\label{eq:lemma1-p} \tag{L.3}
		q(4m,m) \ge \left(\min_{\ell \in [m,4m]} \ubar{a}(\ell) \right)^6, 
	\end{equation}
	and for $ m \ge \ell \ge k \in [1,n]$,
	\begin{equation}
		\tag{L.4} \label{eq:lemma2-p}
		\max \left\{q(m,k),\dfrac{1-\big(1-\bar{b}(\ell)\big)^{2}}{q(\ell,k)} \right\} \ge 	1-\big(1-q(m,\ell)\big)^{1/2}.
	\end{equation} 
	Third, positive association implies for $\ell \le m \in [1,n]$ and for any $\sigma \in \Sigma$,
	\begin{equation}
		\tag{L.5} \label{eq:lemma3-p}
		\P\left[\sigma \boldsymbol{\cdot} \C(m + 3\beta m,m)\right] \ge \P[\sigma\boldsymbol{\cdot}\Q(m,\ell)]  \cdot \P[\sigma \boldsymbol{\cdot} \B(\ell))],
	\end{equation}
	and the same holds true for the measures $\P^{+}$ and $\P^{-}$.

	We choose $m_0$ to be the maximal $m = 4^j \le n$ such that for all $\sigma, \tau \in \Sigma$, 
	\begin{equation*}
		\P^{+}\left[\sigma \boldsymbol{\cdot} \C(m + 3\beta m,m)\right] \ge f_{18} \left(1-\P^{-}\left[\tau \boldsymbol{\cdot} \C^\star(m + 3\beta m,m)\right]\right).
	\end{equation*}
	If $m_0 = n$, the desired relation \eqref{eq:reduced-statement-stochastic-domination} holds true. Otherwise, it can be shown inductively that for all $m_k := 4^k m_0 \in [4m_0,n]$ and for any $\sigma \in \Sigma$,
	\begin{equation*}
		q(m_k,m_0) \ge f_4\left(1-\P^{-}\left[\sigma \boldsymbol{\cdot} \C^\star(m_k + 3\beta m_k,m_k) \right]\right).
	\end{equation*}
	As in the proof of Theorem \ref{thm_full-RSW},  the argument relies on \eqref{eq:lemma1-p} and \eqref{eq:lemma2-p} and the details are left to the reader. In particular for any $\sigma, \tau \in \Sigma$,
	\begin{equation*}
		\P^{+}\left[\sigma \boldsymbol{\cdot} \Q(n,m_0) \right] \ge f_4\left(1-\P^{-}\left[\tau \boldsymbol{\cdot} \C^\star(n+3\beta n,n) \right]\right) \ge \sqrt{\P^{+}\left[\sigma \boldsymbol{\cdot} \C(n + 3\beta n, n) \right]},
	\end{equation*}
	and so for any $\sigma \in \Sigma$, the probabilities of long crossings at scales $n$ and $m_0$ are related by 
	\begin{equation*}
		\P^{+}\left[\sigma \boldsymbol{\cdot} \C(n+3\beta n,n)\right] \ge \P^{+}\left[\sigma \boldsymbol{\cdot} \C(m_0 + 3\beta m_0,m_0)\right]^2 ,
	\end{equation*}
	where we have used \eqref{eq:lemma3-p}. The analogous relation holds for dual long crossings under the measure $\P^{-}$, and so the desired relation \eqref{eq:reduced-statement-stochastic-domination} follows thanks to the choice of $m_0$. 
\end{proof}

\subsection{Uniform bound on arm events}
\label{subsection_uniform-bound}
In this section, we show how to obtain a uniform lower bound on long crossing probabilities from a uniform lower bound on arm events. 
For $m \ge 1$, we introduce  a family of symmetries allowing only for translations to distance at most $m$. We write
\begin{align*}
	\Sigma_m:= &\left\{ \sigma \in \Sigma  : \sigma \boldsymbol{\cdot} \Lambda_m \subset \Lambda_{2m} \right\}.
\end{align*} 

\begin{theorem}
	\label{thm_uniform-bound}
	Let $n=4^i$ for $j\ge0$, and let $\P$ be a probability measure on $\{0,1\}^{E_{3n}}$ satisfying positive association. If for some $c>0$ and for all scales $m \le n$,
	\begin{equation*}
		\min_{\sigma \in \Sigma_m} \P\left[\sigma \boldsymbol{\cdot} \A(m)\right] \ge c,
	\end{equation*}
	then there exists a constant $c'=c'(c) > 0$ such that
	\begin{equation*}
		\P\left[ \C(n+3\beta n,n)\right] \ge c'.
	\end{equation*}	
\end{theorem}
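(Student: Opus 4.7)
The plan is to follow the architecture of Theorem~\ref{thm_full-RSW}, using the uniform arm bound as a substitute for duality. I set $\ubar{a}(m) := \min_{\sigma \in \Sigma_m} \P[\sigma \cdot \A(m)]$ and $\bar{b}(m) := \max_{\sigma \in \Sigma_m} \P[\sigma \cdot \B(m)]$ as in Section~\ref{subsection_stochastic-domination}; the hypothesis reads $\ubar{a}(m) \ge c$ for every $m \le n$. Via the $\Sigma_m$-version~\eqref{eq:lemma1-p} of Lemma~\ref{lemma:second-relation}, one immediately obtains $q(4m, m) \ge c^6$ at every consecutive pair $m = 4^j \le n/4$ of dyadic scales, as all the translations used in the construction stay within the $\Sigma$-classes covered by the hypothesis. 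In parallel, at a fixed small scale $m_\star = O(1)$, the bridge event $\B(m_\star)$ can be realized as the intersection of a bounded number of $\Sigma_{m_\star}$-translated arm events; positive association therefore yields $\P[\sigma \cdot \B(m_\star)] \ge c_\star(c) > 0$ for every $\sigma \in \Sigma_{m_\star}$---in particular at the origin.

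Next, let $m_0$ be the largest $m = 4^j m_\star \in [m_\star, n]$ with $\bar{b}(m_0) \ge c_\star$; this is well-defined by the preceding paragraph. If $m_0 = n$, proceed directly to closing. Otherwise, $\bar{b}(m) < c_\star$ for every $m \in (m_0, n] \cap 4^{\mathbb{Z}} m_\star$, and iterating the cascading property~\eqref{eq:lemma2-p} from scale $m_0$ up to $n$---exactly as in the induction of Theorem~\ref{thm_full-RSW}, but with constants in place of the homeomorphisms $f_i$---yields $q(n, m_0) \ge \eta(c) > 0$. The key point is to choose $c_\star$ sufficiently small in terms of $c$ (of order $c^{12}$) so that the second term $(1 - (1 - \bar{b}(\ell))^2)/q(\ell, k)$ in the $\max$ of~\eqref{eq:lemma2-p} stays strictly below $1 - (1 - c^6)^{1/2}$ throughout the iteration; the desired $q(m_k, m_0)$ then always dominates the $\max$, and the bound propagates from one scale to the next without degradation.

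Finally, the closing property~\eqref{eq:lemma3-p} at the origin gives $\P[\C(n+3\beta n, n)] \ge q(n, m_0) \cdot \P[\B(m_0)]$. For $m_0 = m_\star$ the first paragraph directly supplies the second factor. For $m_0 > m_\star$ only the symmetrized bound $\bar{b}(m_0) \ge c_\star$ is known, and one must recover an origin-centered estimate $\P[\B(m_0)] \ge c'(c)$; the plan is to pick $\tau \in \Sigma_{m_0}$ realizing $\bar{b}(m_0)$ and combine $\tau \cdot \B(m_0)$ via positive association with $O(1)$ arm events at sub-scales of $m_0$ (each of probability $\ge c$ by hypothesis) acting as horizontal and vertical connectors between the translated and the origin-centered rectangles. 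The hard part of the argument---absent from Theorem~\ref{thm_full-RSW}, where full translation invariance collapses $\bar{b}$ and $\P[\B]$---is precisely this last recentering step: the cascading inherently controls only the symmetrized $\bar{b}$, whereas the closing demands a centered bridge, and bridging this gap using only positive association and the uniform arm hypothesis at multiple scales is the main new subtlety, with the bookkeeping of which $\Sigma_\bullet$-class each composite translation belongs to being the most delicate point.
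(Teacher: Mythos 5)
Your high-level architecture (arm bound $\Rightarrow$ quasi-crossings $\Rightarrow$ cascading $\Rightarrow$ closing) matches the paper's, and your choice of $c_\star$ small so that the second term in the cascading max is dominated is the right bookkeeping. But there is a genuine gap exactly where you flag it: your induction tracks $\bar{b}(m)=\max_{\sigma\in\Sigma_m}\P[\sigma\boldsymbol{\cdot}\B(m)]$, because the original cascading lemma~\eqref{eq:lemma2-p} produces a \emph{translated} bridge at the \emph{middle} scale, so you only control the symmetrized quantity. The closing step needs the origin-centered $\P[\B(m_0)]$, and your proposed recentering---gluing $\tau\boldsymbol{\cdot}\B(m_0)$ to arm events to recover the origin-centered bridge---is not worked out and is not an elementary gluing. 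A symmetry $\tau\in\Sigma_{m_0}$ can shift the bridge rectangle by up to $m_0$, so the translated and origin-centered rectangles may barely overlap; producing the origin-centered long crossing from the translated one plus $O(1)$ arm events at sub-scales is essentially an RSW-type extension problem at scale $m_0$, which is the very thing being proved. So this is not a minor subtlety that can be deferred; it is where the argument would stall.

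The paper avoids recentering altogether by modifying the cascading inequality rather than the bookkeeping. Instead of stopping at the inclusion
\[
  \E\cap\Q(m,\ell)\subset\bigl((-m/6,0)+\B(m)\bigr)\cup\bigl((m/6,0)+\B(m)\bigr),
\]
(which is where the translated bridge at the \emph{middle} scale appears), it intersects further with two quasi-crossings $(\pm m/6,0)+\Q(4m,m)$---each of probability at least $c_1$ by the arm hypothesis and the fact that the translations stay within the allowed range---and observes
\[
  \E\cap\Q(m,\ell)\cap\bigl((-m/6,0)+\Q(4m,m)\bigr)\cap\bigl((m/6,0)+\Q(4m,m)\bigr)\subset\B(4m).
\]
This converts the translated bridge at scale $m$ into the \emph{origin-centered} bridge at scale $4m$, yielding the modified cascading bound~\eqref{eq:lemma2-pp}, which features $b(4m)=\P[\B(4m)]$ directly. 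Consequently $m_0$ is defined via the origin-centered $b(m)$, the induction propagates $q(m_k,m_0)\ge c_2$, and the closing $b(n/4)\ge q(n/4,m_0)\,b(m_0)$ then $\P[\C(n+3\beta n,n)]\ge q(n,n/4)\,b(n/4)$ goes through with no recentering needed. Your proof would become correct if you replaced your use of~\eqref{eq:lemma2-p} with this upgraded version; as written, the final recentering step is a hole, not a technicality.
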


Theorem \ref{thm_uniform-bound} highlights that bounding the probability of $\C(n + 3\beta n,n)$ only involves the use of translated and rotated versions of arm events $\A(m)$ that are at distance at most $m$ from the origin. As a direct corollary of Theorem \ref{thm_uniform-bound}, one can obtain uniform bounds on long crossing probabilities for any aspect ratio by considering larger families of symmetries.

It is natural to ask if the uniform bound on arm event probabilities can be replaced by a uniform bound on short crossing probabilities. A simple counterexample shows that the answer is negative (see Figure \ref{fig:counterexample}): consider a deterministic model of open diagonals, where horizontal edges $\{(x,y),(x+1,y)\}$ are open if and only if $x+y$ is even, and vertical edges $\{(x,y),(x,y+1)\}$ are open if and only if $x+y$ is odd.

\begin{proof}[Proof of Theorem \ref{thm_uniform-bound}]
	In the proof, we will introduce constants $c_0>c_1>c_2>c_3>c_4>0$ which by convention do not depend on  the considered scales.	To start with, we state versions of Lemmas \ref{lemma:second-relation}--\ref{lemma:fourth-relation} based on the bound
	\begin{equation*}
		\min_{m\le n} \, \min_{\sigma \in \Sigma_m} \P\left[\sigma \boldsymbol{\cdot} \A(m)\right] \ge c_0. 
	\end{equation*}
	First, quasi-crossings can be constructed as in Section \ref{section_proof-of-lemma1} with uniformly bounded probability. For $m \le n/4$,
	\begin{equation}
		\tag{L.3'} \label{eq:lemma1-pp}
		q(4m,m)  \ge c_1 := {c_0}^6.
	\end{equation}
	Second, we obtain for $m = 4^j \in [1,n/4]$ and $ \ell \le m$,
	\begin{equation}
		\tag{L.4'} \label{eq:lemma2-pp}
		\max\left\{q(4m,\ell),\frac{b(4m)}{q(m,\ell) \cdot {c_1}^2 }\right\} \ge c_2 := c_1 /2.
	\end{equation} 
	To see this, we recall the proof of Lemma \ref{lemma:third-relation} for the triple $(4m,m,\ell)$: By a union bound and \eqref{eq:lemma1-pp}, it holds that $\max\left\{q(4m,\ell),\P\left[\E\right]\right\}$ is at least $c_1 /2$, where $\E$ denotes the existence of an open $m$-path that contains no $\ell$-path as a subpath. As argued in the proof of Lemma \ref{lemma:third-relation}, it holds that
	\begin{equation*}
		\E \cap \Q(m,\ell) \subset \left((-m/6,0)+\B(m)\right) \cup \left((m/6,0)+\B(m)\right).
	\end{equation*}
	It is then easy to check that 
	\begin{equation*}
		\E \cap \Q(m,\ell) \cap  \left((-m/6,0)+\Q(4m,m)\right) \cap \left((m/6,0)+\Q(4m,m)\right) \subset \B(4m),
	\end{equation*}
	which readily implies \eqref{eq:lemma2-pp}. Here, we use that the event $\Q(4m,m)$ is only translated to distance $m/6$ so that its probability can be bounded similarly to \eqref{eq:lemma1-pp}.
	Third, positive association implies that for $m \ge \ell \ge 1$,
	\begin{equation}
		\tag{L.5'} \label{eq:lemma3-pp}
		b(m) \ge  q(m,\ell) \cdot b(\ell).
	\end{equation}
	
	We are now ready to prove that for a sufficiently small constant $c_4 >0$,
	\begin{equation*}
		b(n/4) \ge c_4.
	\end{equation*}
	To this end, we note that $b(1) \ge {c_0}^2$, and so the choice
	\begin{equation*}
		m_0 = \max\left\{m = 4^j \le n/4 : b(m) \ge c_3 := (c_1 c_2)^2\right\}
	\end{equation*}
	is  well-defined. If $m_0 = n/4$, there is nothing to prove. Otherwise, it inductively follows from \eqref{eq:lemma1-pp} and \eqref{eq:lemma2-pp} that for all $m_k:=4^k m_0 \in [4m_0,n/4]$,
	\begin{equation*}
		q(m_k,m_0) \ge c_2.
	\end{equation*}
	Hence, we deduce from \eqref{eq:lemma3-pp} that 
	\begin{equation*}
		b(n/4) \ge q(n/4,m_0)\cdot b(m_0) \ge c_2 c_3 =:c_4
	\end{equation*}
	as desired.
	Finally, it suffices to note that
	\begin{equation*}
		\P[\C(n+3\beta n,n)]\ge q(n,n/4) \cdot b(n/4) \ge c_1 c_4,
	\end{equation*}
	which concludes the proof.
\end{proof}


\small
\bibliographystyle{alpha}
\bibliography{refs}

\end{document}